\newtheorem{theorem}{Theorem}[section]
\theoremstyle{plain}
\newtheorem{corollary}[theorem]{Corollary}
\newtheorem{definition}[theorem]{Definition}
\newtheorem{lemma}[theorem]{Lemma}
\newtheorem{proposition}[theorem]{Proposition}
\newtheorem{remark}[theorem]{Remark}
\numberwithin{equation}{section}
\newcommand{\R}{\mathbb{R}}
\newcommand{\pa}{\partial}
\newcommand{\Om}{\Omega}
\newcommand{\Hn}{\mathbb{H}^{n}}
\newcommand{\g}{\mathfrak{g}}
\newcommand{\Tau}{\Gamma}
\newcommand{\G}{\mathbb G}
\newcommand{\ve}{\varepsilon}
\newcommand{\cH}{\mathscr{H}}
\newcommand{\vf}{\varphi}
\newcommand{\Ri}{\mathscr R}
\newcommand{\V}{\mathscr W}
\title[]{Borderline gradient estimates at the  boundary in Carnot groups}
\author{Ramesh Manna}
\address[Ramesh Manna]{TIFR Centre for Applicable Mathematics, Sharada Nagar, Chikkabommasandra, Bangalore-560065, India} \email{ramesh@tifrbng.res.in}
\author{Ram Baran Verma}
\address[Ram Baran Verma]{TIFR Centre for Applicable Mathematics, Sharada Nagar, Chikkabommasandra, Bangalore-560065, India} \email{rambaran@tifrbng.res.in}
\subjclass[2010]{Primary 35J25; 35J60; Secondary 35D40}
\date{}
\keywords{Second-order subelliptic equations, borderline gradient continuity, Dini-domains, Carnot group}
\begin{document}
\begin{abstract}
In this article, we prove the continuity of the horizontal gradient near a  $C^{1,\text{Dini}}$ non-characteristic portion of the boundary for solutions to $\Tau^{0, \text{Dini}}$ perturbations of horizontal Laplaceans  as in \eqref{dp1} below where the scalar term is in scaling critical Lorentz space $L(Q,1)$ with $Q$ being the homogeneous dimension of the group. This result  can be thought of both  as a sharpening of the $\Tau^{1, \alpha}$ boundary  regularity result  in  \cite{BG} as well as a subelliptic analogue of the main result in  \cite{KA} restricted to  linear equations.
\end{abstract}

\maketitle

\tableofcontents

\section{Introduction}
In this article we consider the following boundary value problem
\begin{equation}\label{dp1}
\left\{
\begin{aligned}{}
\sum_{i,j=1}^m X_i^\star (a_{ij}X_j u)&=\sum_{i=1}^m X_i^\star f_i + g~~~~~\text{in}\ \Omega \subset \G,\\
u&= h\ \ \ \text{on}\ \partial\Omega
\end{aligned}
\right.
\end{equation}
where $[a_{ij}]$ is an $m\times m$ real symmetric matrix satisfying the following ellipticity condition
\begin{equation}\label{ea01}
\lambda \mathbb{I}_m \leq \mathbb{A}(p) \leq \lambda^{-1} \mathbb{I}_m,~~ p\in \G
\end{equation}
for some $\lambda>0.$ In \eqref{ea01}, $\mathbb{I}_m$ stands for $m\times m$ identity matrix and $\G$ is a Carnot group of step $k$ (see, Definition \ref{D:CG}). The central position of such Lie groups in the analysis of the hypoelliptic operators introduced by H\"ormander in his famous paper \cite{H} was established in the 1976 work of Rothschild and Stein on the so-called \emph{lifting theorem}, see \cite{RS,Snice}. Here, our aim is to obtain the pointwise gradient estimate for  weak solutions to  \eqref{dp1}  upto the non-characteristic portion of the boundary under minimal regularity assumptions on $[a_{ij}],$ $f_i,$ $g,h$ and the boundary $\partial\Omega.$

The fundamental role of such borderline  regularity results in the context of elliptic and parabolic equations is well known.  By using the well established theory of singular integral in the setting of Heisenberg group, interior Schauder estimate has been studied by many authors in \cite{13,14,44,47, GL, LU, XZ} and the reference therein. They play an important role in the analysis of nonlinear PDE's.

In 1981, D. Jerison in his famous works \cite{Je1,Je2} addressed the question of Schauder estimate at the boundary for the horizontal Laplacian in the Heisenberg group $\Hn.$  Jerison divided his analysis in two parts, according to whether or not the relevant portion of the boundary contains so-called \emph{characteristic points} (see Definition \ref{D:char}). At such points the vector fields that form the relevant differential operator become tangent to the boundary  and thus one should expect a sudden loss of differentiability,  somewhat akin to what happens in the classical setting with oblique derivative problems. In fact, Jerison proved that there exist no Schauder boundary estimates at characteristic points! He did so by constructing a domain in $\Hn$ with real-analytic boundary that support solutions of the horizontal Laplacian $\Delta_\cH u = 0$ which vanish near a characteristic boundary point, and which near such point possess no better regularity than H\"older's.  On the other hand he established  Schauder estimates at the non characteristic portion of the boundary.

Very recently in \cite{BCC},  by suitably adapting the Levi's method of parametrix,  Baldi, G. Citti and G. Cupini  established $\Tau^{2, \alpha}$ type  Schauder estimate for non-divergence form operators upto the non-characteristic portion of  a $C^{\infty}$  boundary in more general Carnot groups. Subsequently   in \cite{BG}, by employing an alternate approach based on geometric compactness arguments,  the authors  showed the  validity of $\Tau^{1, \alpha}$  boundary Schauder estimate for  divergence form operators as in  \eqref{dp1} when  boundary is $C^{1, \alpha}$ regular  and  when $a_{ij},f_{i}\in\Gamma^{0,\alpha}, h\in\Gamma^{1,\alpha}, g \in L^{\infty}$.  We note that such compactness arguments has its roots in the fundamental works of Caffarelli as in \cite{Ca} and is  independent of the method of parametrix.  In this article,  we consider  a similar framework  as in \cite{BG} and prove the horizontal continuity of the gradient under weaker assumptions on the coefficients and the domain and when the scalar term $g$ belongs to the scaling critical Lorentz space $L(Q,1)$ with $Q$ being the homogeneous dimension of the Carnot group $\mathbb{G}$. For the precise notion of the function space $L(Q,1)$, we refer the reader  to definition \ref{lorentz1}.

Finally in order to put our results in the right perspective, we note that in 1981,  E. Stein in his visionary work \cite{EM} showed the following "limiting" case of Sobolev embedding theorem.
\begin{theorem}\label{stein}
	Let $L(n,1)$ denote the standard Lorentz space, then the following implication holds:
	\[\nabla v \in L(n,1) \ \implies \ v\  \text{is continuous}.\]
\end{theorem}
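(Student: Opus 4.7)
The plan is to combine the pointwise Riesz-potential representation of a function in terms of its gradient with the fact that the kernel $|z|^{1-n}$ lies in the weak Lebesgue space $L^{n/(n-1),\infty}(\R^n)$, which is the K\"othe associate of $L(n,1)$. Since continuity is a local property, I would first multiply $v$ by a smooth cutoff $\chi$ supported in a ball; the resulting function has compact support and the same continuity properties on the interior, while the commutator term $v\nabla\chi$ lies in $L^\infty$ of a smooth region and contributes only a continuous correction.

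Having arranged compact support, the fundamental integral identity
\[
v(x) = c_n \int_{\R^n} \frac{(x-z)\cdot\nabla v(z)}{|x-z|^n}\, dz
\]
holds pointwise. A direct computation with the distribution function gives $|z|^{1-n}\in L^{n/(n-1),\infty}(\R^n)$. The O'Neil--H\"older inequality in Lorentz spaces,
\[
\int |fg|\, dz \leq C\,\|f\|_{L(n,1)}\,\|g\|_{L^{n/(n-1),\infty}},
\]
applied pointwise in $x$ with $f=\nabla v$ and $g(z)=|x-z|^{1-n}$, yields the uniform bound $\|v\|_{L^\infty}\leq C\|\nabla v\|_{L(n,1)}$. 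This already shows boundedness; continuity requires a further density argument.

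To upgrade boundedness to continuity I would exploit density of $C_c^\infty(\R^n)$ in $L(n,1)$, which is valid because the secondary index $q=1<\infty$. Given $\varepsilon>0$, pick $\varphi\in C_c^\infty$ with $\|\nabla v-\varphi\|_{L(n,1)}<\varepsilon$ and let $I(\psi)(x):=c_n\int (x-z)\cdot\psi(z)|x-z|^{-n}\, dz$. The potential $I(\varphi)$ is continuous in $x$ by dominated convergence applied to its smooth, compactly supported integrand, while the error $|I(\nabla v-\varphi)|$ is bounded uniformly by $C\varepsilon$ from the same O'Neil--H\"older estimate. Therefore $v$ is a uniform limit of continuous functions, hence continuous.

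The main technical obstacle, and really the only one, is the endpoint O'Neil--H\"older inequality between $L(n,1)$ and $L^{n/(n-1),\infty}$. This is proved via the Hardy--Littlewood rearrangement bound $\int |fg|\,dz\leq \int_0^\infty f^*(t)g^*(t)\, dt$ together with the characterisation $\|f\|_{L(n,1)}\sim\int_0^\infty f^*(t)\, t^{1/n-1}\, dt$ and the observation that $g^*(t)\sim t^{-1/n}$ when $g(z)=|z|^{1-n}$. Once this inequality is in hand, everything else (integral representation, density, dominated convergence) is soft.
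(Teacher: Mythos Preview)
The paper does not prove this theorem. Theorem~\ref{stein} is quoted in the introduction as a known result of Stein~\cite{EM} and serves only as historical motivation for the paper's main result; no argument is given or needed in the paper itself. There is therefore no ``paper's own proof'' to compare your proposal against.

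That said, your outline is essentially the standard route to Stein's result and is correct. One small point: in the localisation step you assert that $v\nabla\chi\in L^\infty$, but boundedness of $v$ is exactly what you are trying to prove, so this is circular as written. The fix is easy: from $\nabla v\in L(n,1)\subset L^n_{\mathrm{loc}}$ one has $v\in W^{1,n}_{\mathrm{loc}}$, hence $v\in L^p_{\mathrm{loc}}$ for every finite $p$ by Sobolev embedding, and then $v\nabla\chi$ is compactly supported and lies in $L(n,1)$ (for instance because $L^{n+\varepsilon}$ of a bounded set embeds in $L(n,1)$). With this adjustment the rest of your argument---the Riesz representation, the O'Neil endpoint H\"older inequality between $L(n,1)$ and $L^{n/(n-1),\infty}$, and the density of $C_c^\infty$ in $L(n,1)$---goes through and yields continuity.
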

The Lorentz space $L(n,1)$ appearing in Theorem \ref{stein}  consists of those measurable functions $g$ satisfying the condition
\[
\int_{0}^{\infty} |\{x: g(x) > t\}|^{1/n} dt < \infty.
\]
Theorem \ref{stein} can be regarded as the limiting case of Sobolev-Morrey embedding that asserts
\[
\nabla v \in L^{n+\ve} \implies v \in C^{0, \frac{\ve}{n+\ve}}.
\]
Note that indeed $L^{n+\ve} \subset L(n, 1) \subset L^{n}$ for any $\ve>0$ with all the inclusions being strict.  Now Theorem \ref{stein} coupled with the standard Calderon-Zygmund theory  has the following interesting consequence.
\begin{theorem}\label{st}
	$\Delta u \in L(n,1) \implies \nabla u$ is continuous.
\end{theorem}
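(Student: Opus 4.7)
The plan is to derive Theorem \ref{st} by combining Theorem \ref{stein} with boundedness of the Riesz transforms on the Lorentz space $L(n,1)$. The strategy is to first upgrade the hypothesis $\Delta u \in L(n,1)$ to the statement $\nabla^2 u \in L(n,1)$ via Calder\'on--Zygmund theory, and then apply Stein's endpoint embedding componentwise to $\nabla u$.

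The first step rests on the classical Calder\'on--Zygmund representation
\[
\partial_i \partial_j u = -R_i R_j(\Delta u),
\]
where $R_i$ denotes the $i$-th Riesz transform on $\R^n$. These are convolution Calder\'on--Zygmund operators, so they are bounded on every $L^p$ with $1<p<\infty$. Since $L(n,1)$ is a rearrangement-invariant space sandwiched between $L^{n-\varepsilon}$ and $L^{n+\varepsilon}$ for small $\varepsilon>0$, the Calder\'on interpolation theorem for rearrangement-invariant function spaces (equivalently Boyd interpolation) yields that each $R_iR_j$ is bounded on $L(n,1)$. Consequently,
\[
\|\nabla^2 u\|_{L(n,1)} \lesssim \|\Delta u\|_{L(n,1)}.
\]

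The second step applies Theorem \ref{stein} componentwise. For each $k \in \{1,\dots,n\}$, the weak gradient $\nabla(\partial_k u)$ is a row of $\nabla^2 u$ and therefore belongs to $L(n,1)$ by the previous estimate. Setting $v = \partial_k u$ in Theorem \ref{stein} forces $\partial_k u$ to be continuous. Since this holds for every $k$, the full gradient $\nabla u$ is continuous.

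The one technical point that requires care is the Lorentz boundedness of $R_iR_j$: it is not ``free'' from the $L^p$ theory and must be extracted from interpolation on rearrangement-invariant spaces as above. When working on bounded domains (as in the boundary setting relevant to this paper), one additionally invokes a standard cutoff combined with Green's representation formula to localize the Calder\'on--Zygmund identity, the resulting smooth correction being controlled by interior $L^\infty$ bounds.
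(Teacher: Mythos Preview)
Your proposal is correct and follows exactly the route the paper indicates: the paper does not give a detailed proof of Theorem~\ref{st} but simply states that it follows from Theorem~\ref{stein} ``coupled with the standard Calderon-Zygmund theory,'' which is precisely the argument you have written out---Riesz-transform representation of $\partial_i\partial_j u$, Lorentz-space boundedness by interpolation, then Stein's embedding applied componentwise to $\nabla u$.
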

The analogue of Theorem \ref{st}  for general nonlinear and possibly degenerate elliptic and parabolic equations has become accessible not so long ago through a rather sophisticated and powerful nonlinear potential theory (see for instance \cite{M1,M2, M3, M4, M5, M6} and the references therein).  The first breakthrough in this direction came up in the work  of Kuusi and Mingione in \cite{M4} where they showed that the analogue of Theorem \ref{st}  holds for operators modelled after the $p$-Laplacian. Such a result  was subsequently generalized to $p$-Laplacian type systems by the same authors in  \cite{M5}.

Since then, there has been several generalizations of Theorem \ref{st} to operators with other kinds of nonlinearities and in context of  fully nonlinear elliptic equations. For instance, the gradient potential estimate for fully nonlinear elliptic equations has been established by Daskalopoulos-Kuusi-Mingione, see \cite[Theorem 1.2]{M16}.  We  also refer to  \cite{KA} for the boundary analogue of the regularity result in  \cite{M16}  and also to the more recent work \cite{BM} for similar borderline regularity results in the context of normalized $p$-Laplacian.  We note that the main idea in order to establish such end point gradient continuity estimates is to employ the modified Riesz potential defined as follows.
\begin{eqnarray} \label{ig}
\tilde{I}_q^g(p,R)=\int^{R}_{0}\Big(\frac{1}{|\Omega\cap B(p,\tau)|}\int_{\Omega\cap B(p,\tau)}|g(x)|^{q}dx\Big)^{\frac{1}{q}}d\tau,
\end{eqnarray}
where $B(p,\tau)$ is defined  as in \eqref{pseudo} below.  In fact, one estimate the $L^{\infty}$ norm of the gradient as well as a certain  moduli of continuity estimate in terms of such  modified Riesz potential. Then the continuity of the gradient follows from the fact that
\begin{equation}\label{potential}
\tilde{I}_q^g(p,R)\rightarrow0~\text{as}~R\rightarrow0
\end{equation}
provided $g\in L(Q,1)$ and $q<Q,$ for the details, see \cite[Theorem 1.3]{M16}. We will follow a  similar approach to prove our main result  Theorem \ref{main}. $\alpha$-decreasing (see Definition 2.11) property
of the modulus of continuity will play an important role in our arguments.

Taking these considerations into account,  we initiate the study of the regularity property of the solution of \eqref{dp1}. In order to state the main theorem, we  introduce a few relevant notations. Given an open set $\Om\subset \G,$ $p_0\in \pa \Om$ and $\tau>0,$ we set
\begin{equation}\label{VS}
\V_\tau = \Om\cap B(p_0,\tau),\ \ \ \ \ \ \mathscr S_\tau = \pa\Om\cap B(p_0,\tau).
\end{equation}
We now state our main theorem.
\begin{theorem}\label{main}
	Let $\Om \subset \G$ be of class $C^{1,\text{Dini}}$ and  $p_0 \in \pa \Om$ be such that for some $\tau>0$ we have that  the set $\mathscr S_\tau$ consists only of  non-characteristic points . Let $u\in \mathscr{L}^{1,2}_{loc}(\V_\tau) \cap C(\overline{\V_\tau})$ be a weak solution to \eqref{dp1}, with $a_{ij}, f_i, g$ and $h$ satisfying the following hypothesis:
	\begin{equation}\label{assump12}
	a_{ij} \in \Tau^{0, \text{Dini}}(\overline{\V_\tau}),\ \ f_i \in \Tau^{0,\text{Dini}}(\overline{\V_\tau}),\ \ g \in L(Q,1),\ \ h \in \Tau^{1,\text{Dini}}(\overline{\V_\tau}).
	\end{equation}
	Moreover, we also assume that the uniform ellipticity condition as in \eqref{ea01} holds. Then $\nabla_\cH u$ is continuous in  $\overline{\V_{\tau/2}}$, and moreover for any $p,q\in\overline{\V_{\tau/2}},$ we have the following estimate:
	\begin{equation}\label{ap}
	|\nabla_\cH u(p)-\nabla_\cH u(q)| \leq C_{1}W(C_{0}d(p,q)),
	\end{equation}
	where $d(\cdot,\cdot)$ is defined by \eqref{rho0}, $\nabla_\cH u$ stands for the horizontal gradient $u$ and $W$ is a modulus of continuity given by \eqref{modu}.
\end{theorem}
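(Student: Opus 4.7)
The plan is to follow the compactness-approximation scheme used in \cite{BG}, augmented by the Riesz potential machinery developed in \cite{M4, M16, KA}. The objective is to establish a discrete excess-decay inequality for the horizontal gradient $\nabla_\cH u$ at each boundary point $p_0 \in \mathscr S_{\tau/2}$ and then sum the associated errors against the Dini moduli of the data and the modified Riesz potential $\tilde I^g_q(p_0, \cdot)$.

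I would begin by localizing near a non-characteristic boundary point and straightening the boundary by a bi-Lipschitz change of coordinates adapted to the horizontal frame; this is available precisely because, at a non-characteristic point, some horizontal vector field is transverse to $\pa\Om$. Since $\Om$ is of class $C^{1,\text{Dini}}$, this flattening introduces only a Dini perturbation in the frozen coefficients. After subtracting a suitable extension of $h$, the problem reduces to homogeneous boundary data, at the price of an additional contribution to $f_i$ which, in view of $h \in \Tau^{1,\text{Dini}}$, still lies in $\Tau^{0,\text{Dini}}$.

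At each dyadic scale $r = \theta^k r_0$, I would freeze the coefficients $a_{ij}$ at $p_0$ and compare $u$ with the solution $v$ of the corresponding constant-coefficient homogeneous problem on $\V_r$, matching $u$ on the non-flat part of the boundary. For $v$, the $\Tau^{1,\alpha}$ boundary estimate of \cite{BG} yields a decay
\begin{equation*}
\sup_{\V_{\theta r}} \bigl|\nabla_\cH v - (\nabla_\cH v)_{\V_{\theta r}}\bigr| \leq C \theta^\alpha \sup_{\V_r} \bigl|\nabla_\cH v - (\nabla_\cH v)_{\V_r}\bigr|.
\end{equation*}
For the difference $w = u - v$, an energy estimate combined with subelliptic Sobolev--Poincar\'e inequalities and a duality argument of Stein type (using $g \in L(Q,1)$ to control the scalar right-hand side through $\tilde I^g_q$ with some $q < Q$) should yield a comparison bound
\begin{equation*}
\frac{1}{|\V_r|}\int_{\V_r} |\nabla_\cH w|\,dp \leq C\,\sigma(r)\,\|\nabla_\cH u\|_{L^\infty(\V_r)} + C\,\tilde I^g_q(p_0, r),
\end{equation*}
where $\sigma(r)$ collects the Dini moduli of $a_{ij}$, $f_i$, $h$, and $\pa\Om$. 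Combining the two estimates produces the excess-decay recursion
\begin{equation*}
E(\theta^{k+1} r_0) \leq C \theta^\alpha E(\theta^k r_0) + C \sigma(\theta^k r_0) + C\,\tilde I^g_q(p_0, \theta^k r_0),
\end{equation*}
where $E(r)$ denotes a normalized mean oscillation of $\nabla_\cH u$ on $\V_r$. Choosing $\theta$ so that $C\theta^\alpha \leq 1/2$, iterating, and using the $\alpha$-decreasing property of the modulus of continuity (Definition 2.11), the Cauchy property of the averages $(\nabla_\cH u)_{\V_{\theta^k r_0}}$ follows, together with a quantitative bound of the form \eqref{ap} in which $W$ is the convolution of a geometric series with $\sigma + \tilde I^g_q$, i.e. the modulus \eqref{modu}.

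The main obstacle I expect is the comparison estimate at each scale. Two delicate points must be treated simultaneously: first, the boundary flattening is only available along the horizontal frame, so the $C^{1,\text{Dini}}$ modulus of $\pa\Om$ enters as a genuine boundary error that must be absorbed at the right order; and second, the scalar datum $g$ lies exactly at the Lorentz borderline, which forces one to trade $L^Q$ control for $L^q$ control with $q < Q$ and recover the summability of the tail through the behavior \eqref{potential} of $\tilde I^g_q$ as $R \to 0$. Once this scale-by-scale comparison is in place, the extension from a single point $p_0$ to the full closure $\overline{\V_{\tau/2}}$ is routine, via interior versions of the same argument together with the already-established continuity at every boundary point.
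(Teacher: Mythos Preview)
Your outline follows the direct comparison/excess-decay paradigm of nonlinear potential theory (as in \cite{M4,M5}), whereas the paper takes the Caffarelli-type compactness route of \cite{BG,KA}. The paper never works with a gradient excess $E(r)$; instead it iterates on the $L^\infty$ deviation of $u$ itself from affine functions, proving by induction that $\|u-L_\nu\|_{L^\infty(\Omega\cap B(\sigma^\nu))}\leq \sigma^\nu\omega(\sigma^\nu)$ for a carefully built modulus $\omega$. The approximating smooth function at each step is produced by a compactness lemma (argue by contradiction, pass to a limit via Arzel\`a--Ascoli using the boundary H\"older bound of Proposition~\ref{hol1}, then apply Theorem~\ref{KNc1} to the constant-coefficient limit on the half-space), rather than by solving a Dirichlet problem on $\V_r$. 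The payoff is that the only a priori input is $\|u\|_{L^\infty}\leq 1$, and no comparison solution is ever constructed on the corner domain $\Omega\cap B(p_0,r)$. Your route, if it could be executed, would be more directly quantitative and closer in spirit to the Euclidean potential estimates.

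Two genuine obstacles in your scheme are precisely what the paper's approach is designed to avoid. First, your comparison estimate carries $\|\nabla_\cH u\|_{L^\infty(\V_r)}$ on the right-hand side before that quantity is known to be finite; in the Euclidean theory this circularity is typically broken by smooth approximation or a prior Lipschitz bound, but at a non-characteristic $C^{1,\text{Dini}}$ boundary in a Carnot group no such bound is available from the hypotheses, and establishing one is essentially the content of the theorem. The paper's iteration at the level of $u$ rather than $\nabla_\cH u$ is exactly what removes this loop. Second, you must produce at each scale a solution $v$ of the frozen-coefficient problem on $\V_r$ matching $u$ on the curved portion of $\partial\V_r$, together with a $\Tau^{1,\alpha}$ boundary estimate for $v$; the result you cite from \cite{BG} requires a $C^{1,\alpha}$ boundary, and $\V_r$ has both a corner and only $C^{1,\text{Dini}}$ regularity on its flat part. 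The remaining ingredients you describe --- controlling $g\in L(Q,1)$ through $\tilde I^g_q$, summing the Dini tails, the $\alpha$-decreasing trick, and patching interior with boundary --- do align with the paper's Steps~2, 4 and~5.
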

Our proof will consist of five main steps.
Though the idea of proof of our main Theorem \ref{main} is motivated by the work of Agnid et. al in \cite{BG}, but due to the lack of the enough regularity on the data and boundary we obtain abstract modulus of continuity of the horizontal gradient instead of the H\"{o}lder modulus of continuity. The presence of the abstract modulus of continuity poses additional difficulty in the proof. For instance one can see steps 3, 4 and 5 in the proof of Theorem \ref{main}. In the step 3 we prove the existence of Taylor polynomial at non-characteristic portion of the boundary, which follows from the mathematical induction in combination with compactness Lemma \ref{compactness}. In order to apply the compactness lemma we define a new rescaled function by \eqref{rescaled} which contains the modulus of continuity $\omega.$ So in order to satisfy all the assumptions in the compactness lemma we need many properties of the modulus of continuity, which is given in step 2 of the proof of Theorem \ref{main}. Similarly, in the proof of continuity of the horizontal gradient on the non-characteristic portion of the boundary (see step 4 in the proof of Theorem \ref{main}) and up to the boundary (step 5 in the proof of Theorem \ref{main}) we need a suitable scaling invariant version of the interior estimate, see Corollary \ref{intr1}. This estimate is a suitable adaptation of Corollary 3.2 in \cite{BG} in our set up. In step 5, we patch up the interior and boundary estimate to get the continuity of the horizontal gradient up to the boundary. In the process of patching, we crucially use $\alpha$-decreasing property of the modulus of continuity.

The article is organized as follows. Section \ref{sec2} consists of some basic definitions concerning the Carnot group. We also collect some known regularity results that will be used in the proof of  Theorem \ref{main}. Section \ref{sec3} is devoted to the proof of our main result Theorem \ref{main}.

\section{Basic definitions and results} \label{sec2}
Before we proceed with the proof of our main Theorem, we need to state some of the basic definitions concerning the Carnot group, modulus of continuity of functions etc. and some of its properties that will be used throughout the article. In the last part of this section, some known regularity results also has been presented which will be needed in the proof of Theorem \ref{main}. Most of the definitions related to the Carnot group, we refer \cite{BG} for the details.
Let us start by defining the Carnot group.
\begin{definition}\label{D:CG}
Given $k\in \mathbb N$, a Carnot group of step $k$ is a simply-connected real Lie group $(\G, \circ)$ whose Lie algebra $\g$ is stratified and $k$-nilpotent. This means that there exist vector spaces $\g_1,...,\g_k$ such that
\begin{itemize}
\item[(1)] $\g=\g_1\oplus \dots\oplus\g_k$;
\item[(2)] $[\g_1,\g_j] = \g_{j+1}$, $j=1,...,k-1,\ \ \ [\g_1,\g_k] = \{0\}$.
\end{itemize}
\end{definition}
We let $m_j = \dim \g_j$, $j = 1, \dots, k,$ and denote by
$N = m_1 + \ldots + m_k$
the topological dimension of $\G$. For simplicity in the notation from here onwards we will write $m$ for $m_{1}.$ Since $\G$ is simply-connected, the exponential mapping $\exp: \g \to \G$ is a global analytic diffeomorphism onto, see for instance \cite{V,CGr}. We will use this global chart to identify the point $p = \exp \xi\in \G$ with its logarithmic preimage $\xi\in \g$.

Let us introduce analytic maps $\xi_i : \G \rightarrow \g_j$, $j = 1,\ldots,k$, by $p = \exp\left(\xi_1(p)+\ldots+ \xi_k(p)\right)$. For $p \in \G$, the projection of the \emph{logarithmic coordinates} of $p$ onto the layer $\g_j$, $j = 1,\ldots, k,$ are defined by
\begin{equation}
\label{coor}
x_{j,s}(p) = \langle \xi_j(p),e_{j,s}\rangle, \quad s = 1,\ldots,m_j,
\end{equation}
where $(x_1(p),...,x_m(p)) = (x_{1,1}(p),...,x_{1,m}(p))$ are the horizontal coordinates of $p$ and the sets $\{e_{j,1}, \dots, e_{j,m_j}\}$, $j = 1,\dots, k,$ are a fixed orthonormal basis of the $j$-th layer $\g_j$ of the Lie algebra $\g$.
Sometimes, we will  omit the dependence in $p$, and identify $p$ with its logarithmic coordinates
\begin{equation}\label{pvar}
p \cong (x_1,...,x_m, x_{2,1},...,x_{2,m_2},....,x_{k,1},...,x_{k,m_k}).
\end{equation}
For simplify the notation  let
\begin{equation}\label{xis}
\xi_1 = (x_1,...,x_m),\ \xi_2 = (x_{2,1},...,x_{2,m_2}), . . . , \xi_k = (x_{k,1},...,x_{k,m_k}).
\end{equation}
Furthermore, we write
$x = x(p) \cong \xi_1 = (x_1,...,x_m)$, and $y = y(p)$ the $(N - m)-$dimensional vector
\[
y \cong (\xi_2,...,\xi_k) = (x_{2,1},...,x_{2,m_2},....,x_{k,1},...,x_{k,m_k}).
\]
In this case, we will write $z = (x,y)$, see \cite{FS}. For every $j=1,...,k$ we also use the following multi-index notation
$\alpha_j = (\alpha_{j,1},...,\alpha_{j,m_j}) \in (\mathbb N\cup \{0\})^{m_j}.$

In this article, we assume that $\{e_1,...,e_m\}$ is  an orthonormal basis  of $\g_1$, and that $\{X_1,...,X_m\}$ are left-invariant vector fields on $\G$.
Note that, the vector fields $\{X_1,\ldots,X_m\}$ form a basis for the so-called  horizontal sub-bundle $\cH$ of the tangent bundle $T\G$.
Given a point $p\in \G$, the fiber of $\cH$ at $p$ is given by
\begin{equation}\label{fiber}
\cH_p = d\mathcal{L}_p(\g_1).
\end{equation}
\begin{definition}[Horizontal Laplacean]\label{D:sl}
The \emph{horizontal Laplacean} associated with an orthonormal basis $\{e_1,...,e_m\}$ of the horizontal layer $\g_1$ is the left-invariant second-order partial differential operator in $\G$ defined by
\begin{equation}\label{subl}
\Delta_\cH = -\sum^m_{j=1}X^{\star}_{j}X_j=\sum^m_{j=1}X^2_j,
\end{equation}
where $\{X_1,...,X_m\}$ are left-invariant vector fields on $\G$ and the formal adjoint of $X_j$ in $L^2(\G)$ is given by $X_j^{\star} = -X_j$.
\end{definition}

\subsection{Gauge pseudo-distance}\label{SS:intrinsic}
In a Carnot group there exists a left-invariant distance $d_C(p,p_0)$ associated with the horizontal subbundle $\cH$, see for instance \cite{Be, NSW} and Chapter 4 in \cite{Gems}. A piecewise $\mathbb{C}^1$ curve $\alpha:[0,T]\to \G$ is called \emph{horizontal} if there exist piecewise continuous functions $b_i:[0,T]\to \G$ with $\sum_{i=1}^m |b_i| \le 1$ such that
\[
\alpha'(t)=\sum_{i=1}^m b_i(t)X_i(\alpha(t)).
\]
We define the \emph{horizontal length} of $\alpha$ as $\ell_\cH(\alpha) = T$ and the metric
\[
d_C(p,p_0)=\underset{\alpha\in \Gamma(p,p_0)}{\inf} \ell_\cH(\alpha),~p, p_0\in \G
\]
where $\Gamma(p,p_0)$ is the collection of all horizontal curves $\alpha:[0,T]\to \G$ such that $\alpha(0)=p$ and $\alpha(T)=p_0$.
The metric $d_C(p,p_0)$ is called the \emph{Carnot-Carath\'eodory  distance}. We can always extend this metric to a full Riemannian metric in $\R^N$ so that its volume element is the Lebesgue measure $\mathcal{L}.$ By Chow's theorem \cite{BR}, any two points can be connected by a horizontal curve, which makes $d_C$ a metric on $\R^N.$

The Carnot-Carath\'eodory metric $d_C(p,p')$ is equivalent to a more explicitly
defined pseudo-distance function, called the  \emph{gauge pseudo-distance}, defined as follows.
Let $||\cdot||$ denote the Euclidean distance to the origin in $\g$. For $\xi = \xi_1 +\cdots+\xi_k \in \g$, $\xi_j \in \g_j$, $j = 1,\ldots,k$, we define
\begin{equation}\label{homd}
|\xi|_{\g} = \left(\sum^k_{j=1} ||\xi_j||^{\frac{2k!}{j}}\right)^{2k!}, \ \ \ \ \quad |p|_{\G}=|\exp^{-1} p|_{\g}\ \quad p \in \G.
\end{equation}
The function $p\to |p|_\G$ is called the non-isotropic group gauge and satisfies for any $\lambda>0$
\begin{equation}\label{gau}
|\delta_\lambda(p)| = \lambda |p|,
\end{equation}
where dilations $\{\delta_\lambda\}_{\lambda>0}$ are group automorphisms (see \cite{BG}) and $|p|=|p|_\G$.
The \emph{gauge pseudo-distance} in $\G$ is defined by
\begin{equation}\label{rho0}
d(p,p_0) = |p^{-1} \circ p_0|.
\end{equation}

Now we define the metric and the gauge pseudo ball centered at $p$ with radius $R$
\begin{equation}\label{pseudo}
B_C(p,R) = \{p_0 \in\G\ |\ d_C(p_0,p)<R\},\ \ \ \ \ \ B(p,R)=\{p_0 \in\G\ |\ d(p_0,p)<R\},
\end{equation}
respectively. When the center is the group identity $e$, we will write $B_C(R)$ and $B(R)$ instead of $B_C(e,R)$ and $B(e,R)$.
Now, we denote $|E| = \int_E dp$ the Haar measure of a set $E\subset \G$.
Note that $\omega_C = \omega_C(\G) = |B_C(1)|>0$ and $\omega = \omega(\G) = |B(1)|>0$, and hence for every $p \in \G$ and $R > 0$,
\begin{equation}\label{volb}
|B_C(p,R)| =\omega_C R^Q,\ \ \ \ \ \ \ \ \ \ \ |B(p,R)| =\omega R^Q.
\end{equation}
\begin{lemma}[\cite{NSW}]\label{L:dis}
	For every connected  $\Omega \subset\subset \G$ there exist $C,\varepsilon
	>0$ such that
	\begin{equation}\label{xy}
	C d_\Ri(p,p_0) \leq d_C(p,p_0)\leq C^{-1} d_\Ri(p,p_0)^\varepsilon,
	\end{equation}
	where $d_\Ri(x,y)$ is the left-invariant Riemannian distance in $\G$ and $p, p_0 \in \Omega$.
\end{lemma}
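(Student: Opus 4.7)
The plan is to prove the two inequalities separately, exploiting the fact that on the bounded set $\Omega$ one has uniform control when comparing horizontal curves with arbitrary Riemannian curves. Since the two distances are both left-invariant and the statement is local, it suffices to work in a neighborhood of the group identity $e$ and then translate.

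\textbf{Lower bound $C d_\Ri \leq d_C$.} Any horizontal curve $\alpha$ with $\alpha'(t) = \sum_{i=1}^m b_i(t) X_i(\alpha(t))$ and $\sum_i |b_i(t)| \leq 1$ is, in particular, a piecewise $C^1$ curve in $\G$. Because $X_1, \dots, X_m$ extend to a smooth frame on $\overline{\Omega}$, one has $|\alpha'(t)|_\Ri \leq C$ with a constant depending only on the chosen Riemannian extension of $\cH$. Hence the Riemannian length of $\alpha$ is at most $C T = C \ell_\cH(\alpha)$, and taking the infimum over $\alpha \in \Gamma(p,p_0)$ yields $d_\Ri(p,p_0) \leq C d_C(p,p_0)$.

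\textbf{Upper bound $d_C \leq C d_\Ri^{\varepsilon}$.} This is the substantive inequality and I would produce it with $\varepsilon = 1/k$, where $k$ is the step of $\G$. The main tool is the \emph{ball-box theorem} in the Carnot setting. Combining the equivalence of $d_C$ with the gauge pseudo-distance \eqref{rho0} and the homogeneity \eqref{gau} of $|\cdot|_\G$ under the dilations $\delta_\lambda$, one shows that in the logarithmic coordinates \eqref{pvar}, the CC-ball $B_C(e,r)$ is comparable to the anisotropic box $\{|x_{j,s}| \leq c\, r^j : j=1,\dots,k,\ s=1,\dots,m_j\}$. Now given $p,p_0 \in \overline{\Omega}$, set $q = p^{-1}\circ p_0$ and $\rho = d_\Ri(p,p_0)$; uniformly on the compact set $\overline{\Omega}$ each logarithmic coordinate is controlled by the Riemannian distance as $|x_{j,s}(q)| \leq C\rho$. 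To fit $q$ into $B_C(e,r)$ one then needs $r^j \geq c^{-1}|x_{j,s}(q)|$ for every layer $j$; the binding constraint at $j=k$ forces $r \geq C\rho^{1/k}$. Hence $d_C(p,p_0) \leq C d_\Ri(p,p_0)^{1/k}$, which is the right-hand inequality with $\varepsilon = 1/k$.

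The genuine obstacle is the ball-box inclusion for layers $j \geq 2$, i.e.\ one must produce explicit horizontal paths reaching points in pure $\g_j$-directions with horizontal length bounded by (displacement)$^{1/j}$. I would do this by iterated commutators: the Baker--Campbell--Hausdorff formula applied to the loop $\exp(sX_i)\exp(sX_\ell)\exp(-sX_i)\exp(-sX_\ell)$ produces displacement of order $s^2$ along $[X_i,X_\ell] \in \g_2$ while consuming horizontal length $\asymp s$; iterating $j$ times realizes any direction in $\g_j$ with CC-cost $\asymp$ (displacement)$^{1/j}$. The stratification hypothesis (2) in Definition \ref{D:CG} ensures that the resulting iterated brackets span all of $\g$, so every nearby group element is reachable this way, which closes the proof.
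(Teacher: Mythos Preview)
The paper does not supply its own proof of this lemma: it is stated with a citation to \cite{NSW} and used as a black box. So there is nothing to compare your argument against directly.

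That said, your sketch is the standard route and is sound in outline. The lower bound is immediate as you say. For the upper bound, reducing to the ball--box comparison via the gauge and then producing horizontal paths to $\g_j$-directions by iterated commutator loops is exactly the mechanism behind the Nagel--Stein--Wainger result (specialized here to the homogeneous Carnot setting, where the global dilations make the box description particularly clean). One point worth tightening if you write this up: the Baker--Campbell--Hausdorff expansion of the commutator loop produces, in addition to the desired $s^2[X_i,X_\ell]$ term, higher-order error terms in $\g_3,\dots,\g_k$; you need to argue that these can be absorbed inductively (correct the top layer first, then work down), or else invoke the exact polynomial structure of BCH in a nilpotent algebra. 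This is routine but is the one place your sketch is genuinely incomplete rather than merely terse.
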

\subsection{The Folland-Stein H\"older classes}
Now, we recall the intrinsic H\"older classes $\Tau^{\kappa, \alpha}$ introduced by Folland and Stein in \cite{F} and especially \cite{FS} see also Chapter 20 in \cite{BLU}.

\begin{definition}\label{hf}
Let $0< \alpha \leq 1$. Given an open set $\Om \subset \G$ we say that $u:\Om\to \R$ belongs to $\Tau^{0, \alpha}(\Om)$ if there exists a positive constant $M$ such that for every $p, p_0 \in \Om$,
\[
|u(p) - u(p_0)| \leq M\ d(p,p_0)^{\alpha}.
\]
We define the semi-norm
\begin{equation}\label{semi}
[u]_{\Tau^{0,\alpha}(\Om)}= \underset{\underset{p \neq p_0}{p, p_0\in \Om}}{\sup} \frac{|u(p)-u(p_0)|}{d(p,p_0)^{\alpha}}.
\end{equation}
Given $\kappa\in \mathbb N$, the spaces $\Tau^{\kappa, \alpha}(\Om)$ are defined inductively: we say that $u \in \Tau^{\kappa, \alpha}(\Om)$ if $X_i u \in \Tau^{\kappa-1,\alpha}(\Om)$ for every $i=1, .., m$.
\end{definition}
Note that for any $\lambda>0,~
[\delta_\lambda u]_{\Tau^{0,\alpha}(\delta_{\lambda^{-1}}(\Om))} = \lambda^\alpha [u]_{\Tau^{0,\alpha}(\Om)},$
where dilations $\{\delta_\lambda\}_{\lambda>0}$ are group automorphisms, see for more details in \cite{BG}.
\begin{definition} [Sobolev space]
 For an  open set $\Om\subset \G$  we denote by $\mathscr{L}^{1,p}(\Om)$, where $1\leq p\leq\infty$,  the Sobolev space $\{f \in L^p(\Om)\ | \ X_jf \in L^p(\Om), j = 1,\ldots,m\}$ endowed with its natural norm
\[
\|f\|_{\mathscr{L}^{1,p}(\Om)} = \|f\|_{L^{p}(\Om)} +   \sum_{j=1}^m \|X_jf\|_{L^{p}(\Om)}.
\]
\end{definition}
\noindent
The local space $\mathscr{L}_{loc}^{1,p}(\Om)$ has the usual meaning. We also denote by $\mathscr{L}^{1,p}_0(\Om) = \overline{C^\infty_0(\Om)}^{||\cdot||_{\mathscr{L}^{1,p}(\Om)}}$. Let $\lambda$ denotes the distribution function of $f$ defined on $\R^N,$ then the non-increasing rearrangement $f^{\star}$ is defined for $t>0$ by letting
$$f^{\star}(t)=\inf \{s>0: \lambda(s)\leq t\}.$$

\begin{definition}[Lorentz spaces \cite{CD}] \label{lorentz1}
	Let $Q$  be  strictly positive numbers such that $Q>1.$ The Lorentz space $L(Q,1)(\R^N)$ is defined as the set of real valued, measurable functions $f$, defined on $\R^N,$ such that:
	$$\|f\|_{L(Q,1)(\R^N)}=\int_0^{\infty}(f^{\star}(t) \, t^{\frac{1}{Q}}) \frac{dt}{t} < \infty.$$
\end{definition}
Note that, Carnot group $\mathbb{G}$ endowed with the Carnot gauge $\|x\|_C=d_C(x,0)$ or with a smooth gauge $x \to |x|_{\g}$ together with the Lebesgue measure $\mathcal{L}$ forms a real variable rearrangement structure. For more details one refer to \cite[Theorem 3.1]{MVS}, see also \cite{aba}.

\subsection{The characteristic set}\label{SS:char}
We start with an open set $\Om \subset \G$ which belongs to a class $C^1$ that is, for every $p_0\in \pa\Om$ there exist a neighborhood $U_{p_0}$ of $p_0$, and a function $\vf_{p_0} \in C^1(U_{p_0})$, with $|\nabla \vf_{p_0}| \geq \alpha > 0$ in $U_{p_0}$, such that
\begin{equation}\label{ome}
\Om \cap U_{p_0} = \{p\in U_{p_0} \mid \vf_{p_0}(p)<0\}, \quad\pa\Om\cap U_{p_0} =\{p\in U_{p_0} \mid \vf_{p_0}(p)=0\}.
\end{equation}
At every point $p\in \partial \Om \cap U_{p_0}$ the outer unit normal is given by
\[
\nu(p) = \frac{\nabla \vf_{p_0}(p)}{|\nabla \vf_{p_0}(p)|},
\]
where $\nabla$ denotes the Riemannian gradient.

\begin{definition}\label{D:char}
	Let  $\Om \subset \G$ be an open set of class $C^1$. A point $p_0\in \pa\Om$ is called \emph{characteristic} if
	\begin{equation}\label{cp}
	\nu(p_0) \perp \cH_{p_0},
	\end{equation}
	where $\cH_{p_0}$ is as in \eqref{fiber}.
	The \emph{characteristic set} $\Sigma = \Sigma_{\Om}$ is the collection of all characteristic points of $\Om$. A boundary point $p_0 \in \pa \Om \setminus \Sigma$ is called  \emph{non-characteristic} boundary point. For more details, we refer to \cite{CG}.
\end{definition}

\subsection{Modulus of continuity and its properties}
\begin{definition}\label{kartik}
A function $\Phi(s)$ for $0\leq s\leq R_0$ is called the modulus of continuity if the following properties are satisfied
\begin{enumerate}
\item{}~$\Phi(s)\rightarrow0$~~~as~~$s\rightarrow0.$
\item{}~$\Phi(s)$ is positive and increasing as a function of $s.$
\item{}~~$\Phi$ is sub-additive, i.e, $\Phi(s_1+s_{2})\leq \Phi(s_1)+\Phi(s_2)$
\item{}~~$\Phi$~ is continuous.
\end{enumerate}
\end{definition}
Let us define the notion of Dini-continuity.
\begin{definition}
Suppose that $\Omega\subset\mathbb{G}$ and $f:~\Omega\longrightarrow\R$ is a given function. Then we define the modulus of continuity of $f$ as follows:
\begin{equation}
\omega_{f}(s)=\sup_{d(p,\overline{p})\leq s}|f(p)-f(\overline{p})|.
\end{equation}
We say that the function $f$ is Dini-continuous if
\begin{equation}\label{di}
\int^{1}_{0}\frac{\omega_{f}(s)}{s}ds<\infty.
\end{equation}
\end{definition}
Notice that for a continuous function $f,$ $\omega_{f}$ satisfies all properties (1)-(4) mentioned in Definition \ref{kartik}. Similarly, for a vector valued function $(f_1,f_2,\cdots,f_{m}):\Omega\longrightarrow\R^{m}$ we define the modulus of continuity as follows:
\begin{equation}
\omega_{f}(s)=\sup_{d(p,\overline{p})\leq s}|f(p)-f(\overline{p})|.
\end{equation}
So, as above the function $(f_1,f_2,\cdots,f_m)$ is called Dini-continuous if \eqref{di} holds. From [\cite{GG}, Page 44], we see that any continuous, increasing function $\Phi(s)$ on the interval $[0,R_0]$ which satisfies $\Phi(0)=0$ is modulus of continuity if it is concave. From this, we have  the following important result proved in [\cite{GG}, Theorem 8]:
\begin{theorem}\label{KA2.5}
For each modulus of continuity $\Psi(s)$ on $[0,R_0],$ there is a concave modulus of continuity $\tilde{\Phi}(s)$ with the property
\begin{equation}
\Psi(s)\leq \tilde{\Psi}(s)\leq 2\Psi(s)~~~\text{for~all}~~~s\in[0,R_0].
\end{equation}
\end{theorem}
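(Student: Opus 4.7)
The plan is to define $\tilde\Psi$ as (a monotonized version of) the least concave majorant of $\Psi$ on $[0,R_0]$. Concretely, I would first set
\[
\widehat\Psi(s) := \inf\bigl\{L(s)\ : \ L \text{ affine on } [0,R_0],\ L \geq \Psi \text{ on } [0,R_0]\bigr\}.
\]
Since $\Psi$ is continuous on a compact interval it is bounded, so constant majorants exist and the family is nonempty; hence $\widehat\Psi$ is well defined and finite. As a pointwise infimum of affine (hence concave) functions, $\widehat\Psi$ is itself concave, and the inequality $\widehat\Psi \geq \Psi$ is immediate from the definition.

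The heart of the argument, and the step I expect to be the main obstacle, is the quantitative upper bound $\widehat\Psi(s) \leq 2\Psi(s)$, which is where subadditivity must be used cleverly. Fix $s_0 \in (0,R_0]$ and introduce the specific affine function $L_{s_0}(t) := \Psi(s_0)\bigl(1 + t/s_0\bigr)$. I would show $L_{s_0} \geq \Psi$ on all of $[0,R_0]$ as follows: given $t \in [0,R_0]$ write $t = n s_0 + r$ with $n \in \mathbb{N}\cup\{0\}$ and $0 \leq r < s_0$, and iterate subadditivity $n$ times together with monotonicity $\Psi(r)\leq\Psi(s_0)$ to obtain
\[
\Psi(t) \leq n\,\Psi(s_0) + \Psi(r) \leq (n+1)\Psi(s_0) \leq \bigl(t/s_0 + 1\bigr)\Psi(s_0) = L_{s_0}(t).
\]
Thus $L_{s_0}$ is an admissible affine majorant, so $\widehat\Psi(s_0) \leq L_{s_0}(s_0) = 2\Psi(s_0)$. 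It is precisely the choice of the affine function with slope $\Psi(s_0)/s_0$ and intercept $\Psi(s_0)$ that makes the constant $2$ come out sharp.

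Next I would check the remaining modulus-of-continuity properties of $\widehat\Psi$. The sandwich $0 \leq \widehat\Psi \leq 2\Psi$ immediately gives $\widehat\Psi(0)=0$ and continuity at $0$; continuity on $(0,R_0)$ is automatic for a finite concave function. Subadditivity is a consequence of concavity together with $\widehat\Psi(0)=0$: for $a,b>0$ with $a+b\leq R_0$, writing $a = \tfrac{a}{a+b}(a+b)+\tfrac{b}{a+b}\cdot 0$ and similarly for $b$ and applying concavity gives $\widehat\Psi(a)\geq \tfrac{a}{a+b}\widehat\Psi(a+b)$ and $\widehat\Psi(b)\geq \tfrac{b}{a+b}\widehat\Psi(a+b)$, which add to $\widehat\Psi(a)+\widehat\Psi(b)\geq \widehat\Psi(a+b)$.

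The one technical nuisance is that a concave function vanishing at the origin need not be non-decreasing on $[0,R_0]$ (it may peak and then fall). I would patch this up by letting $s^\star \in [0,R_0]$ be a point where $\widehat\Psi$ attains its maximum and setting
\[
\tilde\Psi(s) := \widehat\Psi\bigl(\min(s,s^\star)\bigr).
\]
This is concave (concave up to $s^\star$ and constant thereafter), non-decreasing, and still a modulus of continuity. The sandwich survives: for $s>s^\star$, using that $\Psi$ is itself non-decreasing, one has $\Psi(s)\leq \widehat\Psi(s)\leq \widehat\Psi(s^\star)=\tilde\Psi(s)$ and $\tilde\Psi(s)=\widehat\Psi(s^\star)\leq 2\Psi(s^\star)\leq 2\Psi(s)$. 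This $\tilde\Psi$ is the desired concave modulus of continuity.
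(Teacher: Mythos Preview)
Your argument is correct. The paper does not supply its own proof of this statement; it simply quotes the result from Lorentz's book \emph{Approximation of Functions} (cited there as \cite{GG}, Theorem~8), so there is nothing in the paper to compare against beyond the citation. Your construction via the least concave majorant, together with the key affine test function $L_{s_0}(t)=\Psi(s_0)(1+t/s_0)$ and the subadditivity estimate $\Psi(ns_0+r)\le (n+1)\Psi(s_0)$, is precisely the classical proof one finds in that reference, and your monotonization patch $\tilde\Psi(s)=\widehat\Psi(\min(s,s^\star))$ correctly handles the possibility that the concave majorant decreases near the right endpoint. One very minor remark: subadditivity of the patched $\tilde\Psi$ is cleanest to justify not by case analysis but by reusing your own observation that any concave function vanishing at the origin is automatically subadditive, since $\tilde\Psi$ is itself concave with $\tilde\Psi(0)=0$.
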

\begin{definition}\label{alphad}
Given $\alpha>0,$ we say that the modulus of continuity $\Psi$ is $\alpha$ decreasing if for any $t_1,t_2\in(0, R_0]$ satisfying $t_1\leq t_2,$ we have
$$\frac{\Psi(t_1)}{t^{\alpha}_1}\geq \frac{\Psi(t_2)}{t^{\alpha}_2}.$$
\end{definition}

\subsection{Some known results}
\noindent
The first result of this subsection is the extension lemma. This will be used in the proof of the compactness lemma below.
\begin{lemma}\label{extl}
	Let $k_0 \in \mathbb{N}$ be a fixed integer and let $\Om$ be a $C^{k_{0},\beta}$ domain, $~f\in \Tau^{{k_0,\beta}}(B(p,1)\cap \Om)$ be a  function for some fixed $p \in \pa \Om$ and $\beta>0.$ There exists a $\Tau^{{k_0,\beta}}$ function $\tilde{f}$ defined on $B(p,1)$ such that $\tilde{f}(\overline{p})=f(\overline{p})$ whenever $\overline{p}\in B(p,1)\cap \Om$
	and
	$$\|\tilde{f}\|_{\Tau^{{k_0,\beta}}(B(p,1))}\leq C'\ \|f\|_{\Tau^{{k_0,\beta}}(B(p,1)\cap \Om)}.$$
\end{lemma}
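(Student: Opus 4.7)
The plan is to construct $\tilde f$ by a Whitney-type extension that mirrors the classical Euclidean argument, but carried out in the gauge quasi-metric of the Carnot group and with horizontal Taylor polynomials in place of ordinary ones. By left-translating we may assume $p=e$ and set $B=B(e,1)$, $F=\overline{B\cap\Om}$. The first step is to observe that the hypothesis $f\in\Tau^{k_0,\beta}(B\cap\Om)$ together with the $C^{k_0,\beta}$ regularity of $\pa\Om$ forces each horizontal derivative $X^\alpha f$ with $|\alpha|\le k_0$ to extend continuously to $F$, with the top-order ones being $\beta$-H\"older on $F$. At each boundary point $q\in\pa\Om\cap B$ this supplies the horizontal Taylor polynomial
\[
P_q(\zeta)=\sum_{|\alpha|\le k_0}\frac{X^\alpha f(q)}{\alpha!}\,(q^{-1}\circ\zeta)^\alpha,
\]
for which the standard Taylor remainder estimate $|f(\zeta)-P_q(\zeta)|\le C\|f\|_{\Tau^{k_0,\beta}(B\cap\Om)}\,d(\zeta,q)^{k_0+\beta}$, valid for $\zeta\in F$, is obtained by integrating the top-order H\"older bound along a horizontal curve joining $\zeta$ to $q$.

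The second step is to cover the open set $B\setminus F$ by a Whitney family of gauge balls $\{B(p_j,r_j)\}$ with $r_j$ comparable to $d(p_j,\pa\Om)$ and bounded overlap, together with a subordinate smooth partition of unity $\{\chi_j\}$ obeying $|X^\alpha\chi_j|\le Cr_j^{-|\alpha|}$ for $|\alpha|\le k_0$; such a decomposition exists because $(\G,d)$ is doubling. For each $j$, I would pick a nearest boundary point $q_j\in\pa\Om$ and define
\[
\tilde f(\zeta)=\begin{cases} f(\zeta), & \zeta\in F,\\ \displaystyle\sum_j \chi_j(\zeta)\,P_{q_j}(\zeta), & \zeta\in B\setminus F.\end{cases}
\]
Continuity of $\tilde f$ across $\pa\Om$ is immediate from the remainder estimate above applied at each $q_j$, and the horizontal derivatives of $\tilde f$ on any single Whitney ball are controlled by subtracting a fixed reference polynomial $P_{q_0}$ from every $P_{q_j}$ and exploiting the elementary bound $|P_{q_j}-P_{q_{j'}}|\le C\|f\|_{\Tau^{k_0,\beta}(B\cap\Om)}\,r_j^{k_0+\beta}$ on overlapping Whitney balls, which in turn follows from the Taylor remainder estimate applied twice at the overlap point.

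The delicate step, and the main obstacle, is establishing the $\beta$-H\"older continuity of $X^\alpha\tilde f$ with $|\alpha|=k_0$ across $\pa\Om$: one must compare $X^\alpha\tilde f(\zeta)$ at an exterior point $\zeta$ sitting in a Whitney ball $B(p_j,r_j)$ with $X^\alpha f(\zeta')$ at a nearby interior point $\zeta'$. The exterior value is a linear combination of $X^\beta f(q_j)$ with $|\beta|\le k_0$ against homogeneous group polynomials in $q_j^{-1}\circ\zeta$, and bridging it to the interior value requires both the boundary H\"older regularity of the top-order horizontal derivatives on $F$ and the triangle inequality for the gauge distance relating $q_j$, $\zeta$, and $\zeta'$ to the nearest boundary point of $\zeta$. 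Once this cross-boundary comparison is in hand, assembling all three regimes (within a single Whitney ball, across adjacent Whitney balls, and across $\pa\Om$) produces the quantitative bound $\|\tilde f\|_{\Tau^{k_0,\beta}(B)}\le C'\|f\|_{\Tau^{k_0,\beta}(B\cap\Om)}$ and completes the proof.
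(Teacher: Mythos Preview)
Your approach is genuinely different from the paper's, and it is worth contrasting them. The paper does \emph{not} use a Whitney extension at all: it first flattens the boundary by the explicit diffeomorphism $\Phi(x,y)=(x',x_m-\psi(x',y),y)$ in logarithmic coordinates, pulls $f$ over to $v=f\circ\Phi^{-1}$ defined on $\{\tilde x_m\ge0\}$, and then extends $v$ across $\{\tilde x_m=0\}$ by the Lions--Magenes higher-order reflection
\[
V(\tilde x',\tilde x_m,\tilde y)=\sum_{i=1}^{k_0+1}c_i\,v\Big(\tilde x',-\frac{\tilde x_m}{i},\tilde y\Big),\qquad \tilde x_m<0,
\]
with the constants $c_i$ chosen so that $\sum c_i(-1/i)^m=1$ for $m=0,\dots,k_0$. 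Finally $\tilde f=V\circ\Phi$. This is far more explicit than your Whitney machinery and uses the $C^{k_0,\beta}$ regularity of $\partial\Om$ in a single stroke (to guarantee that $\Phi$ and $\Phi^{-1}$ preserve the Folland--Stein class), rather than through Taylor polynomials at every boundary point. For the application later in the paper only the case $k_0=0$ is needed, where the reflection reduces to the even extension and your construction reduces to patching constants, so both routes are painless there.

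One caveat on your sketch: for $k_0\ge2$ the formula $P_q(\zeta)=\sum_{|\alpha|\le k_0}\frac{X^\alpha f(q)}{\alpha!}(q^{-1}\circ\zeta)^\alpha$ is not the correct stratified Taylor polynomial. In a Carnot group of step $\ge2$ the $\Tau^{k_0,\beta}$ Taylor expansion must include monomials of \emph{weighted} degree $\le k_0$ coming from the higher layers (e.g.\ the coordinate $x_{2,s}$ has degree $2$), with coefficients built from commutator derivatives, not just horizontal ones; see Folland--Stein. Without these terms the remainder estimate $|f(\zeta)-P_q(\zeta)|\le C\,d(\zeta,q)^{k_0+\beta}$ fails in general. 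Your Whitney argument survives once the correct stratified Taylor polynomials are substituted, but as written the key ingredient is misstated.
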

\begin{proof}
	One can find the proof in \cite{BG}. For the sake of completeness, we briefly present the proof.	
	Let $\tilde{p}=\Phi(p)$ be the $C^{k_0,\beta}$ local diffeomorphism that straightens the portion $\mathscr S_1$ of $\pa \Om.$ In fact, we locally expressed $\Phi$ in logarithmic coordinates as
	\[
	\Phi(x,y)= (x',x_m - \psi(x',y),y).
	\]
	Let  $v(\tilde{p}) = f\circ\Phi^{-1}(\tilde{p})$ and we write $\tilde{p}=\left(\tilde{x}',\tilde{x}_m,\tilde{y}\right)$ the logarithmic  coordinates of $\tilde{p}$. The function $v$ is now defined for $\tilde x_m \ge 0$. Then, by the classical method of extension in term of reflection, we define the extension of $v$ to the region  $\{\tilde{x}_m<0\}$ as follows:
	\begin{equation} \label{ext11}
	V(\tilde{x}',\tilde{x}_m,\tilde{y})=
	\begin{cases}
	v(\tilde{x}',\tilde{x}_m,\tilde{y})\ \ \ \ \ \ \ \ \ \ \ \ \ \ \ \ \tilde x_m \ge 0,
	\\
	\sum^{k_0+1}_{i=1}c_iv(\tilde{x}',-\frac{\tilde{x}_m}{i},\tilde{y})\ \quad \tilde{x}_m<0,
	\end{cases}
	\end{equation}
	where the constants $c_i,~i=1,\dots,k_0+1$ are determined by the system of equations
	\begin{equation}
	\label{constants1}
	\sum^{k_0+1}_{i=1}c_i(-1/i)^m=1,\quad m=0,1, \dots, k_0,
	\end{equation}
	see e.g. p. 14 in \cite{LM}. We now define the extension $\tilde{f}$ of $f$ in the following way $$\tilde{f}=V\circ \Phi.$$
	It is easy to see that the extension function $\tilde{f}\in \Tau^{{k_0,\beta}}(B(p,1)\cap \Om)$ and the following bound holds
	$$\|\tilde{f}\|_{\Tau^{{k_0,\beta}}(B(p,1))}\leq C'\ \|f\|_{\Tau^{{k_0,\beta}}(B(p,1)\cap \Om)}.$$
	This completes the proof of the Lemma.
\end{proof}

Next, we recall the following smoothness result at the non-characteristic portion of the boundary, see Theorem 3.5 \cite{BG}.
\begin{theorem}\label{KNc1}
	Let $\mathbb{A}=[a_{ij}]$ be a symmetric constant-coefficient matrix. Assume that $\Om$ be a $C^\infty$ domain, and let $u\in \mathscr{L}^{1,2}_{loc}(\Om)\cap C(\overline{\Om})$ be a weak solution of \eqref{dp1} with $f_i, g \equiv 0$.  Let $p_0\in\pa\Om$ be a non-characteristic point and assume that  for some neighborhood $W=B_\Ri(p_0, r_0)$ of $p_0$, we have that   $u \equiv 0$ in $\pa \Om \cap W$.  Then there exists an open neighborhood $V$ of $p_0$ depending on $W$  and $\Om$  and a positive constant $C^\star=C^\star(M, p_0)>0$, depending on $p_0$ and $M= \underset{\Om}{\sup}\ |u|$, such that
	\begin{equation}\label{c2u1}
	\|u\|_{C^2(\overline{\Om}\cap V)}\leq C^\star.
	\end{equation}
\end{theorem}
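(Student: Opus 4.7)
The plan is to reduce Theorem \ref{KNc1} to a boundary regularity question for the standard horizontal Laplacean $\Delta_\cH=\sum_{j=1}^m X_j^{2}$ and then invoke Jerison-type smoothness at non-characteristic points. Since $\mathbb{A}$ is a constant symmetric positive-definite matrix satisfying \eqref{ea01}, we may write $\mathbb{A}=P^{T}P$ with $P$ invertible and introduce a new horizontal basis $\tilde X_i=\sum_{j}(P^{-1})_{ij}X_j$ of $\g_1$. A linear change of basis of the first layer $\g_1$ yields an isomorphic Carnot group structure on $\G$, in which $u$ solves $\sum_i\tilde X_i^{2}u=0$. Since non-characteristicity depends only on the subspace $\g_1$ itself, not on the chosen frame, $p_0$ remains non-characteristic, and we may henceforth assume $\mathbb{A}=\mathbb{I}_m$.

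Next I would flatten the boundary. Non-characteristicity of $p_0$ together with the $C^\infty$ regularity of $\pa\Om$ guarantees, via the implicit function theorem, that in logarithmic coordinates $\pa\Om$ is locally a smooth graph of one horizontal coordinate over the remaining coordinates. After a left translation bringing $p_0$ to the group identity and an orthogonal rotation of the horizontal frame aligning $\nu(p_0)$ with $X_1$, the flattening diffeomorphism $\Phi$ used in Lemma \ref{extl} (which is $C^\infty$ in the present setting) carries $u$ to $v=u\circ\Phi^{-1}$, defined on the half-space $\{\tilde x_1\ge 0\}$ near the origin, vanishing on $\{\tilde x_1=0\}$, and solving an equation whose principal part is a smooth perturbation of $\Delta_\cH$.

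With the boundary flat and the sub-Laplacean smooth, I would then appeal to the non-characteristic boundary Schauder theory of Jerison \cite{Je1,Je2} (originally developed for the Heisenberg group) and its extension to general Carnot groups worked out in \cite{BCC} and exploited in \cite{BG}. Combined with standard interior hypoelliptic regularity for H\"ormander sums of squares, this yields that $v$, and hence $u$, is $C^\infty$ up to the non-characteristic portion of the boundary; in particular $u\in C^2(\overline{\Om}\cap V)$ for a suitable neighborhood $V$ of $p_0$. The quantitative bound \eqref{c2u1} then follows by linearity: rescaling $u\mapsto u/M$ and applying the a priori $C^2$ estimate to the rescaled function produces a constant depending only on the flattening data (hence on $p_0$) and on $M$.

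The main obstacle I anticipate is the non-characteristic boundary estimate itself. Unlike in the Euclidean setting, the horizontal vector fields are not invariant under reflection across the flattened boundary, so one cannot reduce to an interior problem by a naive even extension; the genuine analytic input is Jerison's non-isotropic microlocal argument. The dependence of the constant $C^\star$ on $p_0$ rather than only on $\Om$ is the quantitative manifestation of non-characteristicity, encoding how far $\nu(p_0)$ is from lying in the annihilator of $\cH_{p_0}$; consistent with Jerison's counterexamples at characteristic points, this constant must blow up as $p_0$ approaches the characteristic set $\Sigma$.
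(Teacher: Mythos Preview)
The paper does not give its own proof of this statement; it is recalled as a known result (``see Theorem 3.5 \cite{BG}'') and used as a black box in the compactness argument. Your outline---reducing the constant-coefficient operator to $\Delta_\cH$ via a linear change of horizontal frame, flattening the smooth non-characteristic boundary, and then invoking the Jerison-type boundary regularity developed in \cite{Je1,Je2} and extended in \cite{BCC,BG}---is precisely the route taken in those references, so your proposal is consistent with the cited proof and correct in spirit.
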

Next, we state a H\"older continuity result near a $C^{1,\text{Dini}}$ non-characteristic portion of the boundary that is direct consequence of the results in \cite{D}.
\begin{proposition}\label{hol1}
	Let $\Om\subset \mathbb{G}$ be a $C^{1,\text{Dini}}$ domain such that $p_0  \in \pa \Om$ is a non-characteristic point.
	Suppose $u\in \mathscr{L}^{1,2}_{loc}(\Om)\cap C(\overline{\Om})$ be a weak solution of
	\begin{equation}
	\begin{cases}
	\sum_{i,j=1}^m X_i^\star(a_{ij} X_j u) = \sum_{i=1}^m X_i^\star  f_i + g,
	\\
	u=h\ \text{on}\ \partial \Omega,
	\end{cases}
	\end{equation}
	where $\mathbb{A}=[a_{ij}]$ is a symmetric matrix satisfying \eqref{ea01}, for all $p \in \Om.$ Furthermore, assume that  $f^i \in L^{\infty}(\Om),~g\in L^q(\Om),~Q<2q<2Q$ and $h \in \Tau^{0,\gamma}( \pa \Omega)$ for some $\gamma>0.$ Then, there exist $r_0, C>0$ and  $\beta \in (0,1)$, depending on $\Om$, $\lambda$, $\gamma$ and $M \overset{def}{=}\underset{\Om}{\sup}\ |u|<\infty$, such that
	\begin{equation}\label{holder1}
	\underset{\underset{p\neq p'}{p ,p'\in\overline{\Om \cap B(p_0,r)}}}{\sup} \frac{|u(p)-u(p')|}{d(p,p')^{\beta}}\leq C.
	\end{equation}
\end{proposition}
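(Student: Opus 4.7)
The plan is to combine a standard interior Moser--type Hölder estimate for weak solutions of subelliptic divergence--form equations with a boundary oscillation decay that exploits the positive measure density of $\Om^c$ near a non--characteristic $C^{1,\text{Dini}}$ boundary point. The hypothesis $Q<2q$, i.e.\ $q>Q/2$, is precisely what is needed for the zero--order term $g$ to remain sub-critical in the subelliptic Moser iteration, where the CC--Sobolev--Poincaré inequality plays the role of its Euclidean counterpart with $Q$ acting as the dimension.

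First, I would establish the key geometric input: since $p_0\in\pa\Om$ is non--characteristic and $\pa\Om$ is $C^{1,\text{Dini}}$, there exist $c_0,r_0>0$ such that
\[
|B(p_0,r)\setminus\Om|\ge c_0|B(p_0,r)|,\qquad 0<r<r_0.
\]
This exterior sub-Riemannian corkscrew condition follows from the fact that some horizontal vector field $X_i$ is transverse to $\pa\Om$ in a neighborhood of $p_0$, so after an intrinsic flattening chart $\Om$ looks like a half--space up to the Dini modulus of the boundary.

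Next, after replacing $u$ by $u-h(p_0)$, one has $|u|\le[h]_{\Tau^{0,\gamma}}r^\gamma$ on $\mathscr S_r$ for $0<r<r_0$. For such $r$ set $M(r)=\sup_{\Om\cap B(p_0,r)}u$ and $m(r)=\inf_{\Om\cap B(p_0,r)}u$, and consider the two non--negative functions
\[
v_+=u-m(r)+[h]_{\Tau^{0,\gamma}}r^\gamma,\qquad v_-=M(r)-u+[h]_{\Tau^{0,\gamma}}r^\gamma,
\]
each extended by $[h]_{\Tau^{0,\gamma}}r^\gamma$ across $\pa\Om\cap B(p_0,r)$. Each $v_{\pm}$ is then a non-negative super-solution on $B(p_0,r)$ of the associated divergence-form equation, with right-hand side whose $L^{q}$ norm is controlled by $\|f_i\|_{L^\infty}+\|g\|_{L^q}$. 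Applying the subelliptic weak Harnack inequality for non-negative super-solutions (Moser iteration based on the CC Sobolev--Poincaré inequality, as in Capogna--Danielli--Garofalo, Lu, and \cite{D}) together with the density estimate of the first step gives constants $\theta\in(0,1)$ and $\alpha_0=2-Q/q>0$ such that
\[
\mathrm{osc}_{\Om\cap B(p_0,r/2)}u\le\theta\,\mathrm{osc}_{\Om\cap B(p_0,r)}u+C\bigl(r^\gamma+r^{\alpha_0}\bigr).
\]
A standard geometric iteration lemma (Gilbarg--Trudinger type, transcribed into the CC-metric) then converts this into $\mathrm{osc}_{\Om\cap B(p_0,r)}u\le Cr^\beta$ for some $\beta\in(0,1)$ and all $r<r_0$. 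Patching with the interior Moser Hölder estimate (valid at scale $\sim d(p,\pa\Om)$ around each interior point) via a standard rescaling argument produces \eqref{holder1}.

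The main obstacle I expect is formulating the weak Harnack inequality so it simultaneously handles the rough coefficients $a_{ij}\in L^{\infty}$, the divergence forcing $\sum_i X_i^{\star}f_i$ with $f_i\in L^{\infty}$, and the scalar term $g\in L^{q}$ with $q$ only slightly above $Q/2$, together with the extension of $v_{\pm}$ across a boundary that is merely $C^{1,\text{Dini}}$ and the verification of the exterior measure density under this regularity. These ingredients are, however, exactly those developed in \cite{D}, which the statement cites.
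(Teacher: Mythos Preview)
The paper does not give its own proof of this proposition: it is stated in the subsection ``Some known results'' with the sentence ``Next, we state a H\"older continuity result near a $C^{1,\text{Dini}}$ non-characteristic portion of the boundary that is direct consequence of the results in \cite{D}.'' Your proposal is therefore not competing with a proof in the paper but rather reconstructing the argument underlying the cited reference \cite{D}, and the outline you give --- exterior measure density at a non-characteristic $C^{1,\text{Dini}}$ boundary point, subelliptic weak Harnack inequality for the extended super-solutions $v_\pm$, oscillation decay plus iteration, and patching with the interior Moser estimate --- is exactly the scheme of Danielli's paper. In that sense your approach and the paper's ``proof'' coincide: both defer to \cite{D}.
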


\section {Proof of main result} \label{sec3}
In this section, we will prove our main result, Theorem \ref{main}.
Given a bounded open set $\Omega\subset \G,$ with $p_0\in \partial \Omega$ we will use the notations $\V_\tau$ and $\mathscr S_\tau$ as in \eqref{VS}.
The proof of the Theorem \ref{main} follows in several steps. The first step is to establish the compactness lemma. In the proof of the compactness lemma we need the following  Caccioppoli type inequality. This type of inequality has different applications in the PDE's. So we are presenting it as an independent result.
\begin{lemma}\label{energy}
	Suppose that \eqref{ea01} hold. Let $u\in \mathscr{L}^{1,2}_{loc}(\V_1) \cap C(\overline{\V_1})$ be a weak solution to \eqref{dp1} in $\V_1$ with $\|u\|_{L^{\infty}(\V_1)} \leq 1$. Furthermore, assume that  $f^i \in L^{\infty}(\Om),~g\in L^q(\Om), ~Q<2q$ and there is an $R>0$ such that $B(p,2R)\subset\V_1,$ then the following estimate holds:
	\begin{equation}\label{en}
	\int_{B(p,R)}|\nabla_\cH u|^2  \leq   C \left[\sum_{i=1}^m \|f^i\|_{L^{\infty}(B(p,2R))}+ \|g\|_{L^{q}(B(p,2R))}\right],
	\end{equation}
	for some universal $C(Q, \lambda).$
\end{lemma}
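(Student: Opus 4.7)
The plan is to prove \eqref{en} by a standard Caccioppoli-type energy argument, carefully adapted to the sub-Riemannian setting. First, I fix a Lipschitz cutoff $\eta$ supported in $B(p, 2R)$ with $\eta \equiv 1$ on $B(p, R)$, $0 \leq \eta \leq 1$, and horizontal-gradient bound $|\nabla_\cH \eta| \leq C(Q)/R$. Such an $\eta$ is obtained by composing a smooth one-dimensional cutoff with the gauge pseudo-distance $d(\cdot, p)$, exploiting the fact that this pseudo-distance is horizontally Lipschitz (via the equivalence of $d$ and $d_C$ from Lemma \ref{L:dis}, together with the well-known fact that $d_C(\cdot, p)$ is horizontally $1$-Lipschitz).

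Next, I insert $\phi := \eta^2 u$ as a test function in the weak formulation of \eqref{dp1}; since $u \in \mathscr{L}^{1,2}_{loc}(\V_1) \cap L^\infty$ and $\eta$ is compactly supported in $B(p, 2R) \subset \V_1$, the function $\phi$ lies in $\mathscr{L}^{1,2}_0(\V_1)$ and is admissible. Expanding $X_i(\eta^2 u) = \eta^2 X_i u + 2 \eta u \, X_i \eta$ and invoking the ellipticity \eqref{ea01} on the diagonal principal part gives
\[
\lambda \int \eta^2 |\nabla_\cH u|^2 \, dp \leq 2\Bigl|\sum_{i,j} \int a_{ij} X_j u \cdot u \, \eta \, X_i \eta \, dp\Bigr| + \Bigl|\sum_i \int f_i \, X_i(\eta^2 u) \, dp\Bigr| + \Bigl|\int g \, \eta^2 u \, dp\Bigr|.
\]

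Third, each of the terms on the right is estimated using Young's inequality with a small parameter $\delta > 0$, so that the $\delta \int \eta^2 |\nabla_\cH u|^2$ pieces (arising from both the cross term and $\int f_i \eta^2 X_i u$) can be reabsorbed on the left. The scalar piece is handled by H\"older's inequality: $\bigl|\int g \eta^2 u\bigr| \leq \|u\|_\infty \, \|g\|_{L^q(B(p,2R))} \, |B(p,2R)|^{1 - 1/q}$. Using $\|u\|_\infty \leq 1$, $|\nabla_\cH \eta| \leq C/R$, and the volume formula \eqref{volb}, together with the fact that the inclusion $B(p, 2R) \subset \V_1$ forces $R$ to lie in a bounded range, all $R$- and volume-dependent prefactors can be folded into a constant $C = C(Q, \lambda)$, yielding \eqref{en} (up to an implicit $\|u\|_\infty^2$-term dominated by $1$).

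The argument is essentially routine; the only real technical point is the construction of a horizontally Lipschitz cutoff with a quantitative bound on $|\nabla_\cH \eta|$, which is what keeps the final constant dependent only on $Q$ and $\lambda$ rather than on further smoothness data. Note that the stronger hypothesis $Q < 2q$ on the integrability of $g$ is \emph{not} actually invoked in this Caccioppoli step; any $g \in L^1(B(p,2R))$ would suffice here, though it will of course be essential for the finer regularity arguments carried out later in the paper.
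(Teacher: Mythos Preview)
Your proposal is correct and follows essentially the same approach as the paper: test with $\eta^2 u$, use ellipticity \eqref{ea01}, absorb the quadratic gradient term via Young/Cauchy--Schwarz, and handle the $g$-term by H\"older. The paper simply declares ``let $\phi$ be a smooth cut off function'' without discussing the horizontal gradient bound, whereas you go to the trouble of building $\eta$ from the gauge pseudo-distance; your observation that the hypothesis $Q<2q$ is not actually used in this Caccioppoli step is also correct and is a detail the paper does not comment on.
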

\begin{proof}	
	Let $\phi$ be a smooth cut off function such that $\phi \equiv 1$ in $B(p,R)$ and vanishes outside $B(p,2R)$.
	Now by taking  $\eta= \phi^2 u$ as a test function in the weak formulation, we obtain the following equality
	\begin{eqnarray*}
		\int_{B(p,2R)}\phi^2 \langle A \nabla_\cH u, \nabla_\cH u \rangle&=&\int_{B(p,2R)}\phi^2 \, \langle f, \nabla_\cH u \rangle +2\int_{B(p,2R)}\phi u \, \langle f, \nabla_\cH \phi \rangle\\
		&-&\int_{B(p,2R)} g \phi^2u -2\int_{B(p,2R)}\phi u \langle A \nabla_\cH u, \nabla_\cH \phi \rangle,
	\end{eqnarray*}
	where $f=(f_1,\dots,f_m).$ Now, by applying Cauchy Schwartz  inequality and the fact that $\|u\|_{L^{\infty}(\V_1)} \leq 1,$  we obtain
	\begin{eqnarray} \label{e8}
	&&\lambda \int_{B(p,2R)} \phi^2 |\nabla_\cH u|^2 \leq C\bigg[ \sum_i\|f_i\|_{L^{\infty}(B(p,2R))} \|\phi\|^2_{L^{2}(B(p,2R))} \nonumber\\
	&&+\frac{\lambda}{2} \int_{B(p,2R)} \phi^2 |\nabla_\cH u|^2+\|\nabla_\cH \phi\|_{L^{2}(B(p,2R))}+\|g\|_{L^{q}(B(p,2R))} \|\phi\|^2_{L^{2q/(q-1)}(B(p,2R))}\bigg].
	\end{eqnarray}
	By subtracting off the second integral in the right hand side of \eqref{e8} from the left hand side in \eqref{e8}, we obtain that the desired conclusion follows by using bounds on $\phi$ and the fact that $\phi \equiv 1$ in $B(p,R)$.	
\end{proof}
\subsection{Compactness lemma}
Now, we ready to prove the compactness Lemma \ref{compactness}. This lemma states that if the coefficient matrix $[a_{ij}]$ in \eqref{dp1} is very close to the constant matrix in certain norm and the other data are sufficiently small then the solutions of \eqref{dp1} can be approximated by a sufficiently smooth functions, in fact by the solutions of uniformly elliptic equation with constant coefficient.
\begin{lemma}\label{compactness}
	Suppose that \eqref{ea01} hold. Assume that for a given $p_0 = e\in \pa \Om$ the set $\mathscr S_1$ be non-characteristic, and that in the logarithmic coordinates $\V_1$ is given by $\{(x,y)\mid x_m > \psi(x',y)\}$, where $\psi \in C^{1,\text{Dini}}$, and $x'=(x_1,..., x_{m-1}).$ Let $u\in \mathscr{L}^{1,2}_{loc}(\V_1) \cap C(\overline{\V_1})$ be a weak solution to \eqref{dp1} in $\V_1$ with $\|u\|_{L^{\infty}(\V_1)} \leq 1.$
	Then, for a given $\ve >0$ there exists $\delta = \delta(\ve)>0$ such that if
	\begin{equation}\label{dilf1111}
	\|\psi\|_{C^{1,\text{Dini}}}\leq \delta,\ ||a_{ij}-a^{0}_{ij}||_{L^{\infty}(\V_1)}\leq \delta,\ ||h||_{\Tau^{0,\alpha}(\mathscr S_1)}\leq \delta,\ ||f_{i}||_{L^{\infty}(\V_1)}\leq \delta,\ ||g||_{L^{q}(\V_1)} \leq \delta,
	\end{equation}
	we can find  $w\in C^2( \overline{\V_{1/2}})$  such that
	\[
	\|u-w\|_{L^{\infty}(\V_{1/2})}\leq \ve,
	\]
	with
	\[
	\|w\|_{C^2(\overline{\V_{1/2}})}\leq C C^\star.
	\]
	Here, the constant $C>0$ is a universal constant, whereas $C^\star$ can be taken as that in the estimate \eqref{c2u1} in Lemma \ref{KNc1}, corresponding to $p_0 = e$ and $M=1$.
\end{lemma}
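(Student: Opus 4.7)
The plan is a standard contradiction/compactness argument: assume the lemma fails, extract a convergent subsequence of putative counterexamples, and show that the limit is a function $w$ as claimed, producing a contradiction. The role of the assumed smallness in \eqref{dilf1111} is that every term in the problem degenerates in the limit to the boundary value problem for the constant-coefficient operator $\sum_{i,j} a^0_{ij} X_i^\star X_j$ on the flat half-space $\{x_m > 0\} \cap B(e,1)$ with zero boundary data, to which Theorem \ref{KNc1} applies.

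Concretely, suppose the conclusion fails. Then there exist $\varepsilon_0 > 0$ and sequences $\psi_k, a^k_{ij}, h_k, f^k_i, g_k, u_k$ satisfying \eqref{dp1} with data bounded by $1/k$ in the norms of \eqref{dilf1111}, with $\|u_k\|_{L^\infty} \le 1$, but such that for every $w \in C^2(\overline{\V^k_{1/2}})$ with $\|w\|_{C^2} \le C C^\star$ we have $\|u_k - w\|_{L^\infty(\V^k_{1/2})} > \varepsilon_0$. To compare functions on the $k$-dependent domains $\V^k_1 = \{x_m > \psi_k(x',y)\} \cap B(e,1)$, I would straighten each domain by the $C^{1,\text{Dini}}$ diffeomorphism $\Phi_k(x,y) = (x', x_m - \psi_k(x',y), y)$ used in Lemma \ref{extl} and transfer the PDE to the fixed half-ball $\{\tilde x_m > 0\} \cap B(e,1)$. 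Since $\|\psi_k\|_{C^{1,\text{Dini}}} \to 0$, the transferred operator has coefficients converging uniformly to the constant matrix $a^0_{ij}$ (plus lower-order perturbations going to zero), and the transferred right-hand sides $f^k_i, g_k$ still go to zero in their respective norms.

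Next I would produce uniform compactness of the transferred $\tilde u_k$. The Caccioppoli estimate of Lemma \ref{energy} gives a uniform $\mathscr L^{1,2}$ bound on interior subdomains. The boundary Hölder estimate of Proposition \ref{hol1}, whose hypotheses are met uniformly along the sequence (the boundary data $h_k \to 0$ in $\Gamma^{0,\alpha}$ and $g_k \to 0$ in $L^q$ with $Q < 2q < 2Q$), yields equicontinuity of $\tilde u_k$ up to the straightened boundary in a neighborhood of $e$. Arzelà–Ascoli then produces a subsequence $\tilde u_k \to u_\infty$ uniformly in $\overline{\V_{3/4}}$. Passing to the limit in the weak formulation, using the $L^\infty$ convergence $a^k_{ij} \to a^0_{ij}$ and the $L^\infty$/$L^q$ smallness of $f^k_i, g_k$, shows that $u_\infty$ is a weak solution of the constant-coefficient equation $\sum_{i,j} X_i^\star(a^0_{ij} X_j u_\infty) = 0$ on $\{\tilde x_m > 0\} \cap B(e,1)$, vanishing on the flat piece of the boundary (because $h_k \to 0$ uniformly on $\mathscr S_1^k$). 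Theorem \ref{KNc1} applied at the non-characteristic point $e$ with $M = 1$ then furnishes $w := u_\infty \in C^2(\overline{\V_{1/2}})$ with $\|w\|_{C^2} \le C^\star$. Transporting $w$ back via $\Phi_k$ yields an admissible competitor whose $L^\infty$ distance to $u_k$ tends to zero, contradicting the assumption.

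The main obstacle is the moving domain: because $\partial \V^k_1$ depends on $k$ and only converges in $C^{1,\text{Dini}}$ (not smoothly), one must verify that the Hölder estimate of Proposition \ref{hol1} produces \emph{uniform-in-}$k$ constants, and that the boundary trace in the limit is genuinely zero rather than only small. Both are controlled by the smallness assumption $\|\psi_k\|_{C^{1,\text{Dini}}}, \|h_k\|_{\Gamma^{0,\alpha}} \le 1/k$. A secondary technical point is that in the passage to the limit, the straightened operator in divergence form acquires Jacobian factors from $\Phi_k$; these factors converge to the identity in $C^{0,\text{Dini}}$, so the limiting weak formulation is exactly the one to which Theorem \ref{KNc1} applies. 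Once these two points are checked carefully, the rest is routine.
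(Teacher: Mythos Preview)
Your overall strategy---contradiction plus compactness, with the limiting problem being the constant-coefficient equation on the flat half-space to which Theorem \ref{KNc1} applies---matches the paper. But there is a genuine gap in the final step, and a related structural issue earlier that you underestimate.

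\medskip
\textbf{The gap.} You propose to take the limit $u_\infty\in C^2(\overline{\{x_m>0\}\cap B(1/2)})$ and then ``transport $w$ back via $\Phi_k$'' to obtain the competitor on $\overline{\V^k_{1/2}}$. But $\Phi_k$ is only $C^{1,\text{Dini}}$, so $u_\infty\circ\Phi_k$ is at best $C^{1,\text{Dini}}$, not $C^2$; it is therefore \emph{not} an admissible competitor in the class $\{w:\|w\|_{C^2(\overline{\V^k_{1/2}})}\le CC^\star\}$. Nor can you simply restrict $u_\infty$ to $\V^k_{1/2}$, since $\V^k_{1/2}$ may protrude into $\{x_m<0\}$ where $u_\infty$ is undefined. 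The paper handles this differently: instead of straightening and transferring the PDE, it keeps the equation on the original (moving) domain and uses Lemma~\ref{extl} with a \emph{higher-order} reflection (three terms, so that derivatives up to second order match) to extend each $u_\nu$ to the full ball $B(1)$. The extensions $U_\nu$ are uniformly H\"older, and their uniform limit $U_0$ inherits the same reflection formula \eqref{extw1}. Since $U_0$ is $C^2$ on the upper half by Theorem~\ref{KNc1} and the reflection formula matches second derivatives across $\{x_m=0\}$, one gets $U_0\in C^2(B(1/2))$; then $U_0$ itself, restricted to $\overline{\V^\nu_{1/2}}$, is the admissible competitor for every $\nu$.

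\medskip
\textbf{The structural issue.} In a Carnot group the boundary-flattening map $\Phi_k(x,y)=(x',x_m-\psi_k(x',y),y)$ is not a group automorphism, so the pushed-forward vector fields $(\Phi_k)_*X_i$ are not horizontal in general. Consequently the ``transferred operator'' is not of the form $\sum X_i^\star(\tilde a_{ij}X_j\,\cdot)$, and neither Lemma~\ref{energy} nor Proposition~\ref{hol1} applies to it as stated; your remark that the Jacobian factors converge to the identity is not enough to place the intermediate problems in the class where those results hold. The paper sidesteps this entirely: Proposition~\ref{hol1} is applied to $u_\nu$ on the original domain $\V^\nu_1$ (giving uniform H\"older bounds before any extension), and the PDE is passed to the limit only on interior balls $B(p,\eta)\subset\V^\nu_1$, where no change of variables is needed.

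\medskip
In short, replace ``straighten and transfer the PDE'' by ``extend by higher-order reflection and leave the PDE alone''; then your outline becomes the paper's proof.
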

\begin{proof}
	The proof of lemma follows by the standard contradiction argument as in the work \cite{Ca}. Suppose that there exists an $\ve_0>0$ such that for every  $\nu\in\mathbb{N}$ we can find:
	\begin{enumerate}
		\item a matrix-valued function $\mathbb A^\nu = [a^\nu_{ij}]$ with continuous entries in $\G$ and satisfying \eqref{ea01},
		\item a domain $\Om_\nu$ with ${\V}^\nu_1 = \Om_\nu\cap B(1)$ and $\mathscr S^\nu_1 = \pa \Om_\nu \cap B(1),$
		\item a solution $u_\nu$ to the problem
		\begin{equation}\label{cacc1}
		\sum_{i,j=1}^m X_i^\star (a^\nu_{ij}X_j u_\nu)  = \sum_{i=1}^m X_i^\star f^\nu_i + g_\nu\ \ \ \text{in}\ \V^\nu_1,\ \ \ \ \ u_\nu  = h_\nu\ \ \ \text{on}\ \mathscr S^\nu_1,
		\end{equation}
	\end{enumerate}
	along with
	\begin{equation}\label{uk1}
	\begin{aligned}{}
	&\|u_\nu\|_{L^{\infty}(\V^\nu_1)}\leq 1,\\
	&\|\psi_\nu\|_{C^{1,\text{Dini}}} \leq \frac 1\nu,~||a^\nu_{ij}-a^{0}_{ij}||_{L^{\infty}(\V^\nu_1)} \leq \frac 1\nu,~||h_\nu||_{\Tau^{0,\alpha}(\mathscr S_1^\nu)}\leq \frac 1\nu,\\
	&||f^\nu_{i}||_{L^{\infty}(\V^\nu_1)}\leq \frac 1\nu,~ ||g_\nu||_{L^{q}(\V^\nu_1)} \leq \frac 1\nu,
	\end{aligned}
	\end{equation}
	but for every $w\in C^2(\overline{\V^\nu_{1/2}})$ and  $\|w\|_{C^2(\overline{\V^\nu_{1/2}})} \le C C^\star$ we have
	\begin{equation}\label{contr1}
	\|u_\nu-w\|_{L^{\infty}(\V^\nu_{1/2})}\geq \ve_0.
	\end{equation}
	Note that the sets $\V^\nu_1$ above are described in the logarithmic coordinates by the functions $\psi_\nu\in C^{1,\text{Dini}}$, that is, $\{(x,y)\mid x_m> \psi_\nu(x',y)\}$. Now, we will show that the validity of \eqref{contr1} leads to a contradiction. We proceed by observing that the uniform bounds in \eqref{uk1} combined with Proposition \ref{hol1}, produces constants $C, \beta>0,$ depending on $\lambda, \alpha$, but not on $\nu,$ such that
	\[
	\|u_\nu\|_{\Tau^{{0,\beta}}(\V^\nu_{4/5})}\leq C.
	\]
	Since $u_\nu$'s are defined on varying domains $\V^\nu_1,$ we need to work with functions defined on same domain. To do this,
	we now use an idea similar to that in the proof of \cite[Lemma 4.1]{BG}. After flattening the boundary as in Lemma \ref{extl}, we extend $u_\nu$ to $B(1)$ using \eqref{ext11} and denote the extended function by $U_\nu.$  By Lemma \ref{extl} with $k_0=0$, it is easy to see that such an extension ensures that $U_\nu$ is uniformly bounded in $\Tau^{{0,\beta}}(B(\frac{4}{5}) ).$
	As a consequence, we have the following convergence results.
	\begin{enumerate}
		\item By  applying Arzela-Ascoli theorem we obtain a subsequence, that we will still denote by $\{U_\nu\}_{\nu\in \mathbb N}$, that converges uniformly to a function $U_0\in \Tau^{{0,\beta}}(B(4/5))$. Clearly, $U_0$ satisfies
		\begin{equation}\label{extw1}
		U_0(x',x_m,y)=
		\begin{cases}
		U_0(x',x_m,y)\ \ \ \ \ \ \ \ \ \ \ \ \ \ \ \ \ \ \ \ \ \ \ \ \ \ x_m\geq 0,\\
		\sum^3_{i=1} c_i U_0(x',-x_m/i,y)\ \ \ \ \ \ \ \ \ \ \ x_m< 0,
		\end{cases}
		\end{equation}
		where the constants $c_1,c_2$ and $c_3$ are given by the system (\ref{constants1}).
		\item From \eqref{uk1}, we see that $f_\nu \to 0$ as $\nu \to \infty$.
		\item Since by \eqref{uk1} we have  $||\psi_\nu||_{\Tau^{0,1}(\V^\nu_1)} \leq \frac{1}{\nu}$ for every $\nu$ so we  get
		\begin{equation}\label{zero1}
		U_0(x', 0, y)=0.
		\end{equation}
	\end{enumerate}	
	Now, we will show that $U_0 \in \mathscr{L}^{1,2}_{loc}(B(4/5) \cap \{x_m>0\}) \cap C(\overline{B(4/5) \cap\{x_m>0\}}).$ Moreover $U_0$ is a weak solution to the problem
	\begin{equation}\label{weq1}
	\sum_{i,j=1}^m a^{0}_{ij}X_iX_j U_0 = 0\ \ \text{in}\ B(4/5) \cap \{x_m>0\},\ \ \ \ \
	U_0=0\ \  \text{on}\ B(4/5)\cap \{x_m=0\}.
	\end{equation}
	To see this, let us observe that $\|\psi_\nu\|_{C^{1,\text{Dini}}}\le 1/\nu\to 0,$ so for a given $p \in B(4/5) \cap \{x_m>0\},$  there exist $\eta>0$ and $\nu_0(p)\in \mathbb N$ such that for all $\nu\geq \nu_0(p)$ we have $B(p,2\eta)\subset \V^\nu_1.$ By the Caccioppoli inequality (see Lemma \ref{energy} with $R=\eta$) for the problem \eqref{cacc1} combined with the uniform bounds in \eqref{uk1}, we find that for all $\nu\geq\nu_0(p)$ following inequality holds:
	\begin{equation}\label{ue11}
	\int_{B(p,\eta)}|\nabla_\cH u_\nu|^2 \leq C,
	\end{equation}
	for some $C(\lambda,\eta)>0$ independent of $\nu.$ Therefore, $\{u_\nu\}_{\nu\in \mathbb N}$ has a subsequence, which we still denote by $\{u_\nu\}_{\nu\in \mathbb N}$, such that
	\[
	u_\nu\rightarrow w\ \ \text{weakly in}\ \mathscr{L}^{1,2}(B(p,\eta)),\ \  \text{and}\ \ \
	u_\nu\rightarrow w\ \  \text{strongly in} \ \ L^2(B(p,\eta)).
	\]
	Since $\{U_\nu\}_{\nu\in \mathbb N}$ converges to $U_0$ uniformly, by uniqueness of limits we can assert that $w=U_0$ in $B(p, \eta)$. Moreover, using the uniform energy estimate for the $u_\nu'$s in \eqref{ue11} and \eqref{uk1} it follows by standard weak type arguments that $U_0$ is a weak solution to
	\[
	\sum_{i,j=1}^m a^0_{ij}X_i X_j U_0=0
	\]
	in $B(p, \eta),$ and hence a classical solution by H\"ormander's hypoellipticity theorem in \cite{H}. By the arbitrariness of $p \in B(4/5) \cap \{x_m>0\}$ and \eqref{zero1}, we conclude that \eqref{weq1} holds.
	
	We can now make use of the estimate from  Theorem \ref{KNc1} to obtain
	\[
	\|U_0\|_{C^2(\overline{B(1/2) \cap \{x_m>0\}})}\leq C^\star
	\]
	for some universal $C^\star>0$.	This follows since $[a^0_{ij}]$ is a constant coefficient matrix, and the portion $B(4/5)\cap \{x_m=0\}$ of the boundary of $B(4/5) \cap\{x_m>0\}$ is non-characteristic and $C^\infty.$ Now, from  the expression of $U_0$ in \eqref{extw1} we see that the second derivatives in $x_m$ are continuous across $x_m = 0$, and thus in fact $U_0 \in C^2(B(1/2))$, and
	\[
	\|U_0\|_{C^2(\overline{\V^\nu_{1/2}})} \leq \|U_0\|_{C^2(B(1/2))}\leq C C^\star,
	\]
	where $C>0$ is a universal constant. This shows that $w = U_0$ is an admissible candidate for the estimate \eqref{contr1}.  In particular, we have for $\nu\in\mathbb N$
	\[
	0 < \ve_0 \le \|u_\nu- U_0\|_{L^{\infty}(\V^\nu_{1/2})},
	\]
	which is a contradiction for large enough $\nu$'s, since $u_\nu \to U_0$ uniformly. This completes the proof of the lemma.
\end{proof}
Having proved the compactness lemma, now we are ready to prove main Theorem \ref{main}. Since proof of the theorem is long so we have divided it in many steps.
\begin{proof}{of Theorem \ref{main}.} We divide the proof into five steps:
	\begin{enumerate}
		\item Preliminary reductions:
		\item Setting modulus of continuity.
		\item  Existence of the first order Taylor polynomial at every $\overline{p}\in \mathscr S_{1/2}.$
		\item Continuity of the horizontal gradient on $\mathscr S_{1/2}.$
		\item Patching the interior and boundary estimate (Modulus of continuity of the horizontal gradient upto the boundary).
	\end{enumerate}
	\noindent \textbf{(1)~~Preliminary reductions}~Let us make some observations.
	
	\noindent\textbf{(a)}~First we consider $\hat{u}=u-h$ which solves:
	\begin{equation}
	\sum_{i,j=1}^m X_i^\star (a_{ij}X_j \hat{u})= \sum_{i=1}^m X_i^\star \hat{f}_i + g\ \ \ \text{in}\ \V_{\tau},\ \ \ \ \ \hat{u}  =0\ \ \ \text{on}\ \mathscr S_{\tau},
	\end{equation}
	where $\hat{f}_{i}=f_{i}-\sum^{m}_{j=1}a_{ij}X_{j}h,$ which is again Dini continuous with the modulus of continuity depending on the modulus of continuity of $A=[a_{ij}],$ $h$ and $f_{i}.$ More precisely, for any $p,q\in\Om$  we have:
	\begin{equation*}
	|\hat{f}_{i}(p)-\hat{f}_{i}(q)|\leq \,\,\omega_{f_{i}}(d(p,q))+\|A\|_{L^{\infty}(\Omega)}\,\,\omega_{\nabla_\cH h}(d(p,q))+\|\nabla_\cH h\|_{L^{\infty}(\Omega)}\,\,\omega_{A}(d(p,q)).
	\end{equation*}
	Therefore, $\hat{f}_{i}$'s are Dini continuous functions and hence, without loss of generality, we can assume that $h\equiv0.$
	
	\noindent\textbf{(b)}~In view of the left translation we may assume that $p_{0}=e.$ Furthermore, by scaling with respect to the family of dilations $\{\delta_{\lambda}\}_{\lambda>0}$ and suitable rotation of the horizontal layer $\g_1,$ without loss of generality we may assume that
	\begin{enumerate}
		\item{}~$\tau=1.$
		\item{}~~$p_{0}=e.$
		\item{}~~In the logarithmic coordinates, $\V_{1}=\Om\cap B(1)$ can be expressed as
		\begin{equation}
		\big\{(x',x_{m},y)~~~|~~~x_{m}>\psi(x',y)\big\}
		\end{equation}
		with $\psi(0,0)=0,$ $\nabla_{x'}\psi(0,0)=0$ and $\|\psi\|_{C^{1,\text{Dini}}}\leq1.$
	\end{enumerate}
	\noindent\textbf{(c)}~~In view of the scaling we may assume that the data are sufficiently small (satisfying \eqref{remark2}), so that we can employ Lemma \ref{compactness}. Indeed, for every $0<\tau\leq1$ consider the domain $\Om_{\tau}=\delta_{\tau^{-1}}(\Om).$ In the logarithmic coordinates $\Om_{\tau}$ can be expressed as follows:
	\begin{equation}\label{3.13}
	\Om_{\tau}=\{(x',x_{m},y_{2},y_{3},\cdots y_{k})~~|~~(\tau x',\tau x_{m}, \tau^{2}y_{2},\cdots\tau^{k} y_{k})\in\Omega\}.
	\end{equation}
	Observe that $\partial\Om_{\tau}$ is given by:
	\begin{equation}
	x_{m}=\psi_{\tau}(x',y)=\psi_{\tau}(x', y_{2},\cdots y_{k}):=\frac{1}{\tau}\psi(\tau x', \tau^{2}y_{2},\cdots \tau^{k}y_{k}).
	\end{equation}
	We set
	\[\mathscr W_{\tau}=\Om_{\tau}\cap B(\tau^{-1}),\,\,\,\,\,\,\mathscr T_\tau=\partial\Omega_{\tau}\cap B(\tau^{-1}).\]
	Let us observe that:
	\begin{equation*}
	\left\{
	\begin{aligned}{}
	\nabla_{x'}\psi_{\tau}(x',y)&=\nabla_{x'}\psi(\tau x', \tau^{2}y_{2},\cdots \tau^{k}y_{k})\\
	\nabla_{y_{j}}\psi_{\tau}(x',y)&=\tau^{j-1}\nabla_{y_{j}}\psi(\tau x',\tau^{2}y_{2},\cdots\tau^{k}y_{k}),\,\,\,\,\text{for}\,\,\,j=2\cdots k.
	\end{aligned}
	\right.
	\end{equation*}
	Thus, $\nabla\psi_{\tau}(x',y)\rightarrow\langle\nabla_{x'}\psi(0,0),0\rangle$ as $\tau\rightarrow0.$ Therefore, by Taylor's theorem we get
	\begin{equation}
	\psi_{\tau}(x',y)\rightarrow\langle\nabla_{x'}\psi(0,0),x'\rangle=0~~~~~\text{as}~~~\tau\rightarrow0,
	\end{equation}
	consequently,
	\begin{equation}
	\partial\Om_{\tau}\cap B(1)\longrightarrow\{x_{m}=0\}\cap B(1).
	\end{equation}
	It is also easy to see that for any $(x',y),(\bar{x'},\bar{y})\in\Om_{\tau}\cap B(1),$ we have:
	\begin{equation}\label{psi}
	\begin{aligned}{}
	&|\nabla\psi_{\tau}(x',y)-\nabla\psi_{\tau}(\bar{x'},\bar{y})|\\
	&\leq (1+\tau+\tau^{2}+\cdots+\tau^{k-1})\omega_{\nabla\psi}(\tau|x'-\bar{x'}|+\cdots+\tau^{k}|y_{k}-\bar{y_{k}}|)\rightarrow0,
	\end{aligned}
	\end{equation}
	as $\tau\rightarrow0.$ In addition, we also observe that $u_{\tau}(p)=u(\delta_{\tau}p)$ solves the following problem:
	\begin{equation}
	\sum_{i,j=1}^m X_i^\star (a_{ij,\rho}X_ju_{\tau})= \sum_{i=1}^m X_i^\star f_{i,\tau} + g_{\tau}\ \ \ \text{in}\ \mathscr W_{\tau},\ \ \ \ \ \ u_{\tau}=0\ \ \ \text{on}\ \mathscr S_{\tau},
	\end{equation}
	where
	\[a_{ij,\tau}(p)=a_{ij}(\delta_{\tau}p),~\,\,\,\,f_{i,\tau}(p)=\tau f_{i}(\delta_{\tau}p),~~~g_{\tau}(p)=\tau^{2}g(\delta_{\tau}p)\,\,\,h_{\tau}(p)=h(\delta_{\tau}p).\]
	Consequently, we have the following relations:
	
	\begin{enumerate}
		\item $|a_{ij,\tau}(p)-a_{ij,\tau}(q)|=|a_{ij}(\delta_{\tau}p)-a_{ij}(\delta_{\tau}q)|\leq\omega_{A}(\tau d(p,q))\rightarrow0\,\,\,\,\text{as}\,\,\,\tau\rightarrow0.$
		\item \hspace{3cm}~$\|f_{i,\tau}\|_{L^{\infty}(((\mathscr W_{\tau})))}\leq \tau\|f_{i}\|_{L^{\infty}(((\mathscr W_{1})))}.$
		\item \hspace{3cm}~$|f_{i,\tau}(p)-f_{i,\tau}(q)|=\tau|f_{i}(\delta_{\tau}p)-f_{i,}(\delta_{\tau}q)|
		\leq\tau\omega_{f}(\tau d(p,q))\rightarrow0\,\,\,\,\text{as}\,\,\,\tau\rightarrow0.$
		\item \hspace{3cm}~$\|g_{\tau}\|_{L^{q}(\mathscr W_{\tau})}=\tau^{2-\frac{Q}{q}}\|g\|_{L^{q}((\mathscr W_{1}))}.$
		\item \hspace{3cm}~$\|\nabla_\cH h_{\tau}\|_{L^{\infty}(\mathscr W_{\tau})}\leq\tau\|\nabla_\cH h_{\tau}\|_{L^{\infty}(\mathscr W_{1})}.$
		\item \hspace{3cm}~~$|\nabla_\cH h_{\tau}(p)-\nabla_\cH h_{\tau}(q)|\leq \tau\omega_{\nabla_\cH h}(\tau d(p,q)).$
	\end{enumerate}
	
	\begin{remark}\label{remark1}
		In view of \eqref{psi} and the above relations, it is clear that by choosing $\tau$ sufficiently small, say $\tau_{0},$ we can make all the data sufficiently small so that the compactness lemma is applicable provided we consider $u_{\tau},$ $a_{ij,\tau},$  $f_{i,\tau}$ $g_{\tau},$ $h_{\tau}$ and $\Omega_{\tau}$ instead of corresponding terms $u,$ $a_{i,j},$  $f_{i}$ $g,$ $h$ and $\Omega.$  Therefore, without loss of generality, from here onwards in the proof of this theorem we assume that
		\begin{eqnarray}\label{remark2}
		&&\hspace{-.9in}\|a_{ij}-a_{ij}(e)\|_{L^{\infty}(\Omega)\cap B(1)}\leq\tilde{\delta},~~\|\psi\|_{C^{1,\text{Dini}}}\leq\tilde{\delta},~~\|h\|_{\Tau^{0,\alpha}(\mathscr S_{1})}\leq\tilde{\delta},~~~~\|f_{i}\|_{L^{\infty}(\mathscr W_{1})}\leq\tilde{\delta} \nonumber\\
		&&\text{and}~~\|g\|_{L^{q}(\mathscr W_{1})}\leq\tilde{\delta}.
		\end{eqnarray}
		where $\tilde{\delta}$ is given by \eqref{tilde{delta}}.
	\end{remark}
	\noindent \textbf{(2)~~Setting modulus of continuity.}
	Let us first fix a constant $\alpha$ (see also Corollary \ref{intr1}) such that $0<\alpha<1$ and consider the function
	\begin{equation}
	\tilde{\omega}_{1}(\sigma)=\max\{\omega_{\nabla\psi}(\sigma),\sigma^{\alpha}\}.
	\end{equation}
	After normalization and using Theorem \ref{KA2.5}, we can assume that $\tilde{\omega}_{1}$ is concave and $\tilde{\omega}_{1}(1)=1.$ With the help of the above function we can define a new function $\omega_{1}(\sigma)=\tilde{\omega}_{1}(\sigma^{\alpha}).$  Then this function becomes $\alpha$ decreasing (see Definition \ref{alphad}) and $\omega_{1}$ is still Dini continuous, for details see \cite{KA}. Now, let us define
	\begin{equation}\label{m2}
	\tilde{\omega}_{2}(\sigma)=\max\{\sigma^{\alpha},\omega_{f}(\sigma)\}.
	\end{equation}
	Again following the similar argument as above for $\omega_{1},$ without loss of generality we can assume that $\tilde{\omega}_{2}$ concave and $\alpha$ decreasing. Having defined $\tilde{\omega}_{2},$ let us define a new function
	\begin{equation}
	\omega_{2}(\sigma)=:\max\Bigg\{C_{II}\sigma\Big(\frac{1}{|\Omega\cap B(\sigma)|}\int_{\Omega\cap B(\sigma)}|g|^{q}\Big)^{\frac{1}{q}},~~~~\tilde{\omega}_{2}(\sigma)\Bigg\}.
	\end{equation}
	Having defined $\omega_{1}$ and $\omega_{2},$ let us define another function as follows:
	\begin{equation}\label{omega1}
	\omega_{3}(\sigma^{l}):=\frac{1}{\tilde{\delta}}\sum^{l}_{j=0}\omega_{1}(\sigma^{l-j})\omega_{2}(\sigma^{j}),
	\end{equation}
	where $\tilde{\delta}$ is given by \eqref{remark2}.
	Finally, let us set
	\begin{equation}\label{omega}
	\omega(\sigma^{l}):=\max\{\omega_{3}(\sigma^{l}),\sigma^{l\alpha}\}.
	\end{equation}
	We will be using some of the properties of the modulus of continuous functions defined above. So for the sake of completeness we list the required properties and sketch their proofs here.
	\begin{lemma} \label{lma3.4}
		\begin{enumerate}
			\item \label{m1}~We have the following estimate
			\begin{equation}\label{1mod}
			\sum^{\infty}_{j=1}\omega(\sigma^{j})\leq C_{b}.
			\end{equation}
			\item~For any fixed positive integer $\nu\in\mathbb{N},$ the following estimate holds:
			\begin{equation}\label{2mod}
			\sigma^{\alpha}\omega(\sigma^{\nu})\leq \omega(\sigma^{\nu+1}).
			\end{equation}
			\item~$\omega_1$ is monotone.
			\item \label{m41}~$1\leq\omega(1).$
			\item~It is also clear that
			\begin{equation}\label{vmod}
			\frac{1}{\tilde{\delta}}\omega_{f_i}(\sigma)\leq \omega(\sigma)~~~ \text{and}~~~\frac{1}{\tilde{\delta}}\omega_2(\sigma^{\nu})\leq \omega(\sigma^{\nu}).
			\end{equation}
			\item \label{m61}~
			\begin{equation}\label{iv}
			\sigma^{\alpha}\leq \omega(\sigma)
			\end{equation}
			\item~$\omega_1$ is $\alpha$-decreasing.
		\end{enumerate}
	\end{lemma}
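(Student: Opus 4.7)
The plan is to verify the seven items in roughly ascending order of difficulty. Items (3), (4), (6), (7), together with (5), follow directly from unpacking the definitions of $\omega_1$, $\omega_2$, $\omega_3$, and $\omega$; the $\alpha$-decreasing statement (2) reduces to a combinatorial monotonicity argument; the summability bound (1) is the main obstacle and requires translating the Dini continuity of $\omega_{\nabla\psi}$, $\omega_f$, and the Riesz-potential piece built into $\omega_2$ into summability of the sequences $\omega_i(\sigma^k)$.

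For (7), since by construction $\tilde{\omega}_1$ is concave with $\tilde{\omega}_1(0)=0$, the standard fact that $t \mapsto \tilde{\omega}_1(t)/t$ is non-increasing on $(0,1]$ applies; substituting $t=\sigma^\alpha$ shows that $\sigma\mapsto \omega_1(\sigma)/\sigma^\alpha = \tilde{\omega}_1(\sigma^\alpha)/\sigma^\alpha$ is non-increasing, which is exactly (7). Property (3) follows because $\omega_1$ is the composition of the increasing maps $\sigma\mapsto \sigma^\alpha$ and $\tilde{\omega}_1$. Property (6) is immediate from the maximum in \eqref{omega}, and (4) comes from taking $l=0$ there: $\omega(1)=\max\{\omega_3(1),1\}\ge 1$. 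For (5), isolate the $j=\nu$ term in $\omega_3(\sigma^\nu)=\tilde{\delta}^{-1}\sum_{j=0}^\nu \omega_1(\sigma^{\nu-j})\omega_2(\sigma^j)$ and use $\omega_1(1)=\tilde{\omega}_1(1)=1$ to obtain $\omega_3(\sigma^\nu)\ge \tilde{\delta}^{-1}\omega_2(\sigma^\nu)$; combined with $\omega\ge \omega_3$ and $\omega_f\le\tilde{\omega}_2\le\omega_2$, both halves of (5) follow.

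For (2), the cleanest route is to show that $\omega$ is itself $\alpha$-decreasing along $\{\sigma^l\}_l$, which is precisely the content of (2). Set $a_k=\omega_1(\sigma^k)/\sigma^{k\alpha}$ and $b_k=\omega_2(\sigma^k)/\sigma^{k\alpha}$; both sequences are non-decreasing in $k$, by (7) for $a_k$ and by the analogous $\alpha$-decreasing property that $\omega_2$ inherits from the concavification/majorant step in its construction. Then
\[
\frac{\omega_3(\sigma^l)}{\sigma^{l\alpha}} \;=\; \frac{1}{\tilde{\delta}}\sum_{j=0}^l a_{l-j}\,b_j,
\]
and passing from $l$ to $l+1$ appends a non-negative term and replaces each $a_{l-j}$ by the larger $a_{l+1-j}$, so $\omega_3(\sigma^l)/\sigma^{l\alpha}$ is non-decreasing in $l$. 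Since $\sigma^{l\alpha}/\sigma^{l\alpha}=1$ is trivially constant and the maximum of two $\alpha$-decreasing quantities is $\alpha$-decreasing, (2) follows.

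The main obstacle is (1). Writing $\omega(\sigma^j)\le \omega_3(\sigma^j)+\sigma^{j\alpha}$, the geometric tail is harmless, and Fubini on the convolution sum gives
\[
\sum_{l=0}^\infty \omega_3(\sigma^l) \;=\; \frac{1}{\tilde{\delta}}\Big(\sum_{k=0}^\infty \omega_1(\sigma^k)\Big)\Big(\sum_{k=0}^\infty \omega_2(\sigma^k)\Big).
\]
Thus (1) reduces to the Dini continuity of $\omega_1$ and $\omega_2$: if $\omega_i$ is increasing with $\int_0^1 \omega_i(s)/s\,ds<\infty$, then monotonicity yields $\omega_i(\sigma^{k+1})\log(1/\sigma)\le \int_{\sigma^{k+1}}^{\sigma^k}\omega_i(s)/s\,ds$, and summing in $k$ bounds $\sum_k \omega_i(\sigma^k)$ by a constant multiple of $\int_0^1 \omega_i(s)/s\,ds$. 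For $\omega_1=\tilde{\omega}_1(\sigma^\alpha)$, the change of variable $t=s^\alpha$ reduces its Dini integral to that of $\tilde{\omega}_1$, which is finite because $\omega_{\nabla\psi}$ is Dini (as $\psi\in C^{1,\text{Dini}}$) and $s^\alpha$ is trivially Dini. For $\omega_2$, the hypothesis $g\in L(Q,1)$ with $q<Q$ forces the Riesz-potential piece to be Dini via \eqref{potential}, while $\omega_f$ is Dini by \eqref{assump12} and $s^\alpha$ trivially so. Combining, $C_b$ can be taken as a constant multiple of the product of the two Dini integrals plus the geometric tail, completing the proof.
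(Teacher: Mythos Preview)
Your proof is correct and follows essentially the same route as the paper: items (3)--(7) are unpacked from the definitions, item (1) is handled by bounding $\omega\le\omega_3+\sigma^{j\alpha}$, factoring the convolution sum via Fubini into $\big(\sum_k\omega_1(\sigma^k)\big)\big(\sum_k\omega_2(\sigma^k)\big)$, and then controlling each factor by the corresponding Dini integral (for $\omega_1$ via the Dini hypothesis on $\nabla\psi$, for $\omega_2$ via the Dini hypothesis on $f$ together with the $L(Q,1)$ bound on the Riesz piece), exactly as the paper does.

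One small point on (2): your assertion that $b_k=\omega_2(\sigma^k)/\sigma^{k\alpha}$ is non-decreasing is not justified --- the concavification/$\alpha$-decreasing step is applied only to $\tilde\omega_2$, not to the full $\omega_2$, and the Riesz-potential term need not be $\alpha$-decreasing. Fortunately your own argument does not use this: when you pass from $l$ to $l+1$ you only shift $a_{l-j}\to a_{l+1-j}$ and append a non-negative term, which requires solely that $a_k$ be non-decreasing, i.e.\ that $\omega_1$ be $\alpha$-decreasing. This is precisely the mechanism the paper uses (it multiplies $\sigma^\alpha$ into the $\omega_1$ factor and invokes only the $\alpha$-decreasing property of $\omega_1$), so after deleting the superfluous claim about $b_k$ your proof of (2) coincides with the paper's.
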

	We prove \eqref{m1}, \eqref{m61} and rest follows from the definition of the respective modulus of continuity. For details, we refer to Lemmas 4.5 and 4.7 in \cite{KA}.
	\begin{proof} {of \eqref{m1}.} In order to estimate the sum in the left hand side of \eqref{1mod}, we first need to estimate the following sum
		\begin{equation}\label{m4}
		\begin{aligned}{}
		\sum^{\infty}_{j=1}\omega_{3}(\sigma^{j})&=\frac{1}{\tilde{\delta}}\sum^{\infty}_{j=1}\sum^{j}_{i=0}\omega_{1}(\sigma^{j-i})\omega_{2}(\sigma^{i})\\
		&=\frac{1}{\tilde{\delta}}\Big(\sum^{\infty}_{j=0}\omega_{1}{(\sigma^{j})}\Big)\Big(\sum^{\infty}_{j=1}\omega_{2}(\sigma^{j})\Big).
		\end{aligned}
		\end{equation}
		Thus, from \eqref{m4}, it is clear that, in order to estimate the above sum we need to estimate $\sum^{\infty}_{j=1}\omega_{1}(\sigma^{j})$ and $\sum^{\infty}_{j=1}\omega_{2}(\sigma^{j}).$ The sum involving the term $\omega_1$ is finite because of the Dini continuity of $\bigtriangledown\psi.$ More precisely, we have the following estimate:
		\begin{equation}\label{m5}
		\sum^{\infty}_{j=1}\omega_{1}(\sigma^{j})\leq\frac{1}{-\log\sigma}\sum^{\infty}_{j=1}\int^{\sigma^{j-1}}_{\sigma^{j}}\frac{\omega_{1}(t)}{t}dt=\int^{1}_{0}\frac{\omega_1(t)}{t}dt<\infty.
		\end{equation}
		Now, let us estimate the sum involving $\omega_{2}.$ It is easy to see that there exists a constant $\overline{C}$ such that
		\begin{equation}\label{m7}
		\begin{aligned}{}
		\sum^{\infty}_{j=1}\omega_{2}(\sigma^{j})&\leq \overline{C}\int^{2}_{0}\Big(\frac{1}{|\Omega\cap B(\tau)|}\int_{\Omega\cap B(\tau)}|g(x)|^{q}dx\Big)^{\frac{1}{q}}d\tau+\sum^{\infty}_{j=1}\tilde{\omega}_{2}(\sigma^{j})\\
		&=:\overline{C}\tilde{\mathbf{I}_{q}^{g}}(e,2)+\sum^{\infty}_{j=1}\sigma^{\alpha j}+\sum^{\infty}_{j=1}\omega_{f}(\sigma^{j})\\
		&=I+II+III,
		\end{aligned}
		\end{equation}
		where $\tilde{\mathbf{I}_{q}^{g}}$ is defined in \eqref{ig}.
		Note that $I$ is finite because $g\in L(Q,1)$ (see Definition \ref{lorentz1}) so making use of the result in \cite[Equation (3.13)]{M16}, we get
		\begin{equation}\label{m6}
		\sup_{p}\tilde{\mathbf{I}^{g}_{q}}(p,r)\leq\frac{1}{|B(1)|^{\frac{1}{Q}}}\int^{|B(r)|}_{0}\Big[g^{**}(\tau)\tau^{\frac{q}{Q}}\Big]^{\frac{1}{q}}\frac{d\tau}{\tau}.
		\end{equation}
		$II$ is finite because it is geometric sum. While $III$ is finite because $f$ is Dini continuous as in \eqref{m5} the sum containing $\omega_{1}$ is finite. Thus, by using \eqref{m6} (with $f=g$ there), \eqref{m7} and \eqref{m5} in \eqref{m4}, we find that the sum in \eqref{m4} is finite.
	\end{proof}
	
	\begin{proof}{of \eqref{m61}.}
		From \eqref{omega}, if $\omega(\sigma^{\nu})=\sigma^{\nu\alpha},$ then we get
		\begin{equation}
		\sigma^{\alpha}\omega(\sigma^{\nu})=\sigma^{\alpha(1+\nu)}\leq\omega(\sigma^{1+\nu}).
		\end{equation}
		Now suppose that $\omega(\sigma^{\nu})=\omega_{3}(\sigma^{\nu}).$ In this case let us proceed as follows:
		\begin{equation}\label{l4}
		\begin{aligned}{}
		&\sigma^{\alpha}\omega_{3}(\sigma^{\nu})=\frac{1}{\tilde{\delta}}\sum^{\nu}_{j=0}\sigma^{\alpha}\omega_{1}(\sigma^{\nu-j})\omega_{2}(\sigma^{j}) \leq\frac{1}{\tilde{\delta}}\sum^{l}_{j=0}\omega_{1}(\sigma^{1+\nu-j})\omega_{2}(\sigma^{j})~~~~(\text{since}~~\omega_{1}(\cdot)~\text{is}~\alpha~\text{decreasing}) \\
		&\leq\omega{(\sigma^{1+\nu})}~~~(\text{by~definition~}~\omega_{3}) \leq\omega(\sigma^{\nu})~~~\text{by}~\eqref{omega}.
		\end{aligned}
		\end{equation}
	\end{proof}
	\noindent
	\textbf{(3)~Existence of the first order Taylor polynomial at every $\overline{p}\in \mathscr S_{1/2}.$}
	The aim of this section is to establish that $u$ is $\Gamma^{1}(\overline{p})$ for every $\overline{p}\in\mathscr S_{1/2}.$ More precisely, we want to establish the estimate \eqref{1m}, which will be accomplished in two sub steps. In the first sub step we show that for any $\overline{p}\in\mathscr S_{1/2}$ there exists a sequence of first order polynomial approximating $u$ near $\overline{p}.$ Later on in the next substep we show that the limiting polynomial will give the affine approximation to the solution at $\overline{p}.$
	
\noindent	
	\textbf{(a)}~Let $\overline{p}\in\mathscr S_{1/2}$ be a non-characteristic point. In view of translation and rotation without loss of generality we can assume that $\overline{p}=e\in \mathscr S_{1/2}.$ Also by normalizing the solution if necessary, we can assume that $\|u\|_{L^{\infty}(\V_1)}\leq 1.$ Denote the constant $CC^{*}$ in the compactness Lemma \ref{compactness} by $\theta$ and fix $\sigma>0$ such that
	\begin{equation}\label{choicesigma}
	0<\sigma<(4\theta)^{-\frac{1}{1-\alpha}}.
	\end{equation}
	We also let
	\begin{equation}\label{3.37}
	\epsilon=\frac{\sigma^{1+\alpha}}{2}.
	\end{equation}
	Suppose that $\delta(\epsilon)$ be the number in the compactness Lemma \ref{compactness} corresponding to $\epsilon$ defined above. Let us take another number $\tilde{\delta}\in (0,\delta)$ which will be fixed later. In view of the Remark \ref{remark1}, it is clear that by choosing the scaling parameter $\tau$ sufficiently small we may assume that the smallness condition in \eqref{remark2} with such an $\tilde{\delta}$ can be ensured.
	
	For any  $\kappa\in \mathbb{N}\cup \{0\}$, first we denote by $\mathfrak{P}_\kappa$ the set of homogeneous polynomials in $\G$ of homogeneous degree less or equal to $\kappa$.
	Now, we use induction to show that there exists a sequence of polynomials $\{L_{\nu}\}_{\nu\in\mathbb{N}\cup\{-1,0\}}$ in $\mathfrak{P}_{1}$ such that for every $\nu\in\mathbb{N}\cap\{-1,0\}$ following holds:
	\begin{equation}\label{a1}
	\|u-L_{\nu}\|_{L^{\infty}(\Omega\cap B(\sigma^{\nu}))}\leq \sigma^{\nu}\omega(\sigma^{\nu}),
	\end{equation}
	\begin{equation}\label{1b}
	\|L_{\nu+1}-L_{\nu}\|_{L^{\infty}(B(\sigma^{\nu}))}\leq C\sigma^{\nu}\omega(\sigma^{\nu}),
	\end{equation}
	\begin{equation}\label{a11}
	|L_{\nu}|\leq C_{b}\theta~~~(\text{where}~~C_{b}~~\text{is~from}~~\eqref{1mod}),
	\end{equation}
	\begin{equation}\label{1c}
	\|L_{\nu}\circ\delta_{\sigma^{\nu}}\|_{\Tau^{0,1}(\partial\Omega_{\sigma^{\nu}}\cap B(1))}\leq \delta\sigma^{\nu}\omega(\sigma^{\nu}),
	\end{equation}
	where $\Omega_{\nu}=\delta_{\nu^{-1}}(\Omega)$ is defined in \eqref{3.13}. We prove the above assertion by mathematical induction. Let us set $a_{-1}=a_{0}=0$ and by definning the corresponding polynomials $L_{0}=L_{-1}=0$ we get:
	\begin{equation}
	\|u\|_{L^{\infty}(\Omega\cap B(1))}\leq 1\leq w(1)~~~~\text{by Lemma \ref{lma3.4}\eqref{m41}}.
	\end{equation}
	As we want to establish the continuity of the horizontal gradient at the boundary so we consider the polynomial $L_{\nu}$ of the form $L_{\nu}(p)=l_{\nu} x_m,$ where $(x',x_m,y)$ denotes the logarithmic coordinates of $p.$ Thus the result follows for $\nu=-1,0.$ Now, assume that for some fixed $\nu\in\mathbb{N},$ the polynomials $L_{1},L_{2,}\cdots L_{\nu}$ has been constructed satisfying \eqref{a1}-\eqref{1c}. In order to complete the mathematical induction we need to construct $L_{\nu+1}$ such that \eqref{a1}-\eqref{1c} hold for $\nu+1.$ This will be accomplished by using the compactness Lemma \ref{compactness}. Let us consider the following rescaled function
	\begin{equation}\label{rescaled}
	\tilde{u}(p):=\frac{(u-L_{\nu})(\delta_{\sigma^{\nu}}(p))}{\sigma^{\nu}\omega(\sigma^{\nu})},~\text{for}~~~p\in\tilde{\Omega}\cap B(1),
	\end{equation}
	where $\tilde{\Omega}=\Omega_{\sigma^{\nu}}$. It is easy to observe that $\tilde{u}$ satisfies the following problem:
	\begin{equation*}
	\left\{
	\begin{aligned}{}
	\sum_{i,j=1}^m X_i^\star (a_{ij}X_j\tilde{u})&=\sum_{i=1}^m X_i^\star\tilde{\tilde{f_{i}}} + \tilde{\tilde{g}}\,~\text{in}~~\tilde{\Omega}\cap B(1),\\
	\tilde{u}&=\tilde{\tilde{\Phi}}\ \ \ \text{on}\ \partial\tilde{\Omega}\cap B(1),
	\end{aligned}
	\right.
	\end{equation*}
	where
	\begin{equation}
	\tilde{\tilde{f_{i}}}=\frac{\tilde{f_{i}}-\sum^{m}_{j}\tilde{a}_{ij}X_{j}\tilde{L}_{\nu}}{\omega{(\sigma^{\nu})}},\hspace{0.5cm}~\tilde{\tilde{g}}=\frac{\sigma^{\nu}\tilde{g}}{\omega(\sigma^{\nu})},
	\hspace{.5cm}\tilde{\tilde{\phi}}=-\frac{\tilde{L}_{\nu}}{\sigma^{\nu}\omega(\sigma^{\nu})},
	\end{equation}
	and
	\begin{equation}
	\tilde{a}_{ij}(p)=a_{ij}(\delta_{\sigma^{\nu}}p),\,\,\,\tilde{f}_{i}(p)=f_{i}(\delta_{\sigma^{\nu}}p), \,\,\,\tilde{g}(p)=g(\delta_{\sigma^{\nu}}p),\,\,\,\tilde{L}_{\nu}(p)=L_{\nu}(\delta_{\sigma^{\nu}}p).
	\end{equation}
	Since the result follows for $\nu,$ so in view of \eqref{a1}, we have
	\begin{equation}
	\|\tilde{u}\|_{L^{\infty}(\tilde{\Omega}\cap B(1))}\leq1.
	\end{equation}
	It is also easy to observe the following points: Since $L_{\nu}$ is a polynomial of degree 1 so we have
	\begin{equation}
	\sum^{m}_{ij=1}X^{*}_{i}X_{j}(\tilde{a}^{0}_{ij}\tilde{L}_{\nu})=0,
	\end{equation}
	where $a_{ij}^{0}=a_{ij}(e)$ and $\tilde{a}^{0}_{ij}=\tilde{a}_{ij}(e)=a_{ij}(\delta_{\sigma^{\nu}}(e))=a_{ij}(e).$ Consequently,
	\begin{equation}
	X^{*}_{i}(\tilde{a}_{ij}X_{j}\tilde{L}_{\nu})=\sum^{m}_{ij=1}X^{*}_{i}\big((\tilde{a}_{ij}-\tilde{a}^{0}_{ij})X_{j}\tilde{L}_{\nu}\big)
	\end{equation}
	and also
	\begin{equation}
	X^{*}_{i}(\tilde{f}_{i}-\tilde{f}_{i}(e))=X^{*}_{i}\tilde{f}_{i},
	\end{equation}
	since $X^{*}_{i}\tilde{f}_{i}(e)=0.$ Therefore, we find that $\tilde{u}$ satisfies the following equation:
	\begin{equation*}
	\left\{
	\begin{aligned}{}
	\sum^{m}_{i,j=1}X^{*}_{i}\big(\tilde{a}_{ij}X_{j}\tilde{u}\big)&=\sum^{m}_{i=1}X^{*}_{i}F_{i}+\tilde{\tilde{g}}~~\text{in}~~\tilde{\Omega}\cap B(1),\\
	\tilde{u}&=\tilde{\tilde{\phi}}~~\text{on}~~\partial\tilde{\Omega}\cap B(1),
	\end{aligned}
	\right.
	\end{equation*}
	where
	\begin{equation}
	F_{i}=\frac{\tilde{f}_{i}-\tilde{f}_{i}(e)-\sum^{m}_{j=1}(\tilde{a}_{ij}-\tilde{a^{0}_{ij}})X_{j}\tilde{L}_{\nu}}{\omega(\sigma^{\nu})}.
	\end{equation}
	Now, we show that all the hypotheses in the compactness lemma are satisfied. Indeed, let us observe that:
	\begin{equation}
	\tilde{a}^{0}_{ij}=\tilde{a}_{ij}(e)=a_{ij}(\delta_{\sigma^{\nu}}e)=a_{ij}(e).
	\end{equation}
	Thus, we have
	\begin{equation}\label{1h}
	\|\tilde{a}_{ij}-\tilde{a}^{0}_{ij}\|_{L^{\infty}(\tilde{\Omega}\cap B(1))}=\|a_{ij}-a^{0}_{ij}\|_{L^{\infty}(\Omega\cap B(\sigma^{\nu}))}\leq\omega_{a_{ij}}(\sigma^{\nu})\leq \omega_{A}(\sigma^{\nu}).
	\end{equation}
	Therefore, in view of the Remark \ref{remark1} and the discussion in the beginning  of this section we have   $$\|\tilde{a}_{ij}-\tilde{a}^{0}_{ij}\|_{L^{\infty}(\tilde{\Omega}\cap B(1))}\leq \tilde{\delta}.$$ It follows from \eqref{1c} that
	\begin{equation}
	\|\tilde{\tilde{\phi}}\|_{\Tau^{0,\text{Dini}}(\partial\tilde{\Omega}\cap B(1))}\leq \delta.
	\end{equation}
	For any $q\in\mathscr W_{1},$ we have:
	\begin{equation*}
	\begin{aligned}{}
	|F_{i}(q)|=&\frac{|\tilde{f_{i}}(q)-\tilde{f_{e}}-\sum^{m}_{j=1}(\tilde{a}_{ij}(q)-\tilde{a^{0}}_{ij})X_{j}\tilde{L}_{\nu}(q)|}{\omega(\sigma^{\nu})}\\
	\leq&\frac{|f_{i}(\delta_{\sigma^{\nu}}(q))-f_{i}(e)|+\sum^{m}_{j=1}\big|(\tilde{a}_{ij}(q)-\tilde{a^{0}}_{ij})X_{j}\tilde{L}_{\nu}(q)\big|}{\omega(\sigma^{\nu})}\\
	\leq&\frac{\omega_{f_{i}}(\sigma^{\nu})+\sigma^{\nu}\big|(a_{im}(\sigma^{\nu}q)-a_{im}(e))l_{\nu}\big|}{\omega(\sigma^{\nu})},~~~~\text{where}~~L_{\nu}(x)=l_{\nu}x_{m}.\\
	\leq&\frac{\omega_{f_{i}}(\sigma^{\nu})+\sigma^{\nu}\omega_{A}(\sigma^{\nu})|l_{\nu}|}{\omega(\sigma^{\nu})}
	\leq(1+C_b\theta)\tilde{\delta},
	\end{aligned}
	\end{equation*}
	where $C_{b}$ and $\theta$ are from \eqref{1mod} and \eqref{choicesigma}. In concluding the last line we have used \eqref{vmod}, that is, $\frac{1}{\tilde{\delta}}\omega_{f_{i}}(\nu)\leq \omega(\sigma^{\nu}),$ $\omega_{a_{im}}(\sigma^{\nu})\leq \omega_{A}(1)\leq \tilde{\delta},$  $\sigma^{\nu\alpha}\leq\omega(\sigma^{\nu})$ and $\alpha<1.$ So if we choose
	\begin{equation}\label{tilde{delta}}
	\tilde{\delta}<\frac{\delta}{1+C_b\theta}
	\end{equation}
	we get
	\begin{equation}
	\|F_{i}\|_{L^{\infty}(\tilde{\Omega}\cap B(1))}\leq \delta.
	\end{equation}
	Since $\partial\tilde{\Omega}\cap B(1)$ can be expressed as follows:
	\begin{equation}
	x_{m}=\psi_{\sigma^{\nu}}(x',y_{2},\cdots,y_{k})=\frac{\psi(\sigma^{\nu} x',\sigma^{2\nu}y_{2},\cdots,\sigma^{k\nu}y_{k})}{\sigma^{\nu}}.
	\end{equation}
	Let us denote $\psi_{\sigma^{\nu}}$ by $\tilde{\psi}.$ Therefore, for any $p,\overline{p}\in \tilde{\Omega}\cap B(1)$ with $p=(x',x_m,y_2,\cdots,y_k)$  and $\overline{p}=(\overline{x'},\overline{x}_{m},\overline{y}_{2},\cdots,\overline{y}_{k})$ we have
	\begin{equation}
	|\nabla\tilde{\psi}(p)-\nabla\tilde{\psi}(\overline{p})|\leq (1+\tau+\tau^{2}+\cdots\tau^{k-1})\omega_{\nabla\psi}(\tau|x'-\overline{x'}|+\tau^{2}|y_{2}-\overline{y_{2}}|+\cdots\tau^{k}|y_{k}-\overline{y_{k}}|),
	\end{equation}
	where $\tau=\sigma^{\nu}.$ Since $\tau<1$ and $\psi(0,0)=0$ so by  Remark \ref{remark1}, we have
	\begin{equation}
	\|\tilde{\psi}\|_{C^{1,\text{Dini}}}\leq\delta.
	\end{equation}
	Now, let us consider
	\begin{equation*}
	\begin{aligned}{}
	& \hspace{-.25in}\|\tilde{\tilde{g}}\|_{L^{q}(\tilde{\Omega}\cap B(1))}=\Big(\int_{\tilde{\Omega}\cap B(1)}|\tilde{\tilde{g}}(p)|^{q}dq\Big)^{\frac{1}{q}}=\frac{\sigma^{\nu}}{\omega(\sigma^{\nu})}\Big(\int_{\tilde{\Omega}\cap B(1)}|\tilde{g}(p)|^{q}dp\Big)^{\frac{1}{q}}\\ &\leq\frac{\sigma^{\nu}}{\omega(\sigma^{\nu})}\Big(\frac{1}{|\Omega\cap B(\sigma^{\nu})|}\int_{\Omega\cap B(\sigma^{\nu})}|g(p)|^{q}dp\Big)^{\frac{1}{q}} \leq\frac{\omega_{2}(\sigma^{\nu})}{C_{II}\omega(\sigma^{\nu})} \leq \frac{\tilde{\delta}}{C_{II}}~~~\text{in~view~of~}~\eqref{vmod}.
	\end{aligned}
	\end{equation*}
	Therefore, by the compactness Lemma \ref{compactness}, there exists a $v\in C^{2}(B({\frac{1}{2}}))$ such that $\|v\|_{C^{2}(B(\frac{1}{2}))}\leq \theta$ and
	\begin{equation}\label{11}
	\|\tilde{u}-v\|_{L^{\infty}(\tilde{\Omega}\cap B(\frac{1}{2}))}\leq\epsilon.
	\end{equation}
	Moreover, since $v=0$ on $B(4/5)\cap\{x_{m}=0\}$ so by Taylor's formula and the fact that $\|v\|_{C^{2}(B(1/2))}\leq \theta$ there exists $l\in\R$ with $|l|\leq \theta$ such that
	\begin{equation}\label{12}
	\|v-lx_{m}\|_{L^{\infty}(B(\sigma))}\leq \theta\sigma^{2}<\frac{\sigma^{1+\alpha}}{4},
	\end{equation}
	where the last inequality follows from the choice of $\sigma$ in \eqref{choicesigma}. From \eqref{11}, \eqref{12} and the choice of $\epsilon$ (see \eqref{3.37}) along with the triangle inequality we get the following inequality:
	\begin{equation}\label{1e}
	\|\tilde{u}-lx_{m}\|_{L^{\infty}(B(\sigma))}\leq \sigma^{1+\alpha}.
	\end{equation}
	Let us denote by $L(p)=lx_{m}\in\mathfrak{P}_{1}$ so \eqref{1e} implies that
	\begin{equation}\label{sca4}
	\begin{aligned}{}
	\|\tilde{u}-L\|_{L^{\infty}(\tilde{\Omega}\cap B(\sigma))}&=\sup_{p\in\tilde{\Omega}\cap B(\sigma)}\Big|\frac{(u-L_{\nu})(\delta_{\sigma^{\nu}}(p))}{\sigma^{\nu}\omega(\sigma^{\nu})}-L(p)\Big|\\
	&=\frac{1}{\sigma^{\nu}\omega(\sigma^{\nu})}\|u-L_{\nu+1}\|_{L^{\infty}(\Omega\cap B(\sigma^{\nu+1}))},
	\end{aligned}
	\end{equation}
	where
	\begin{equation}\label{1d1}
	L_{\nu+1}(p):=L_{\nu}(p)+\sigma^{\nu}\omega(\sigma^{\nu})L(\delta_{\sigma^{-\nu}}(p)),~~~\text{for}~~p\in \Omega\cap B(\sigma^{\nu+1}).
	\end{equation}
	It follows from \eqref{1e} and \eqref{sca4} that
	\begin{equation}
	\|u-L_{\nu+1}\|_{L^{\infty}(\Omega\cap B(\sigma^{\nu+1}))}\leq \sigma^{\nu+1}\sigma^{\alpha}\omega(\sigma^{\nu})\leq~\sigma^{\nu+1}\omega(\sigma^{\nu+1})~~~(\text{by}~~\eqref{2mod}).
	\end{equation}
	Also, from \eqref{1d1},
	\begin{equation}
	\|L_{\nu+1}-L_{\nu}\|_{L^{\infty}(B(\sigma^{\nu}))}\leq C\sigma^{\nu}\omega(\sigma^{\nu}),
	\end{equation}
	where $C=\|L\|_{L^{\infty}(B(1))}.$ Moreover, from the expression of $L_{\nu+1}$ in terms of $L_{\nu}$ as in \eqref{1d1} we can infer by induction that in the logarithmic coordinates the polynomials $L_\nu$ are of the form
	\begin{equation}\label{1i}
	L_{\nu}(p)=l_{\nu}x_{m},
	\end{equation}
	where
	\begin{equation}\label{1j}
	|l_\nu|\leq\sum^{\nu}_{j=0}\theta\omega(\sigma^{j})\leq\theta\sum^{\infty}_{j=0}\omega(\sigma^{j})\leq C_{b}\theta.
	\end{equation}
	Therefore, \eqref{a11} follows. In order to prove \eqref{1c}, let us consider points $p,\overline{p}\in\partial\tilde{\Omega}\cap B(1),$ where $\tilde{\Omega}=\Omega_{\sigma^{-(\nu+1)}}=\delta_{\sigma^{-(\nu+1)}}\Omega.$ Let $(x,y)$ and $(\overline{x},\overline{y})$ denote the logarithmic coordinates of $p$ and $\overline{p}$ respectively. With $\tau=\sigma^{\nu+1}$ we have
	\begin{equation}
	x_{m}=\frac{\psi(\tau x',\tau^{2}y_{2},\cdots,\tau^{k}y_{k})}{\tau}~~\text{and}~~~\overline{x}_{m}=\frac{\psi(\tau \overline{x}',\tau^{2}\overline{y}_{2},\cdots\tau^{k}\overline{y}_{k})}{\tau}.
	\end{equation}
	This gives
	\begin{equation}\label{sca6}
	\begin{aligned}{}
	|L_{\nu+1}(\delta_{\tau}p)-L_{\nu+1}(\delta_{\tau}\overline{p})|&=|l_{\nu+1}||\tau x_{m}-\tau\overline{x}_{m}|\\
	&=|l_{\nu+1}|\big|\psi(\tau x',\tau^{2}y_{2}\cdots,\tau^{k}y_{k})-\psi(\tau\overline{x}',\tau^{2}\overline{y}_{2}\cdots,\tau^{k}\overline{y}_{k})\big|\\
	&\leq~C_{b}\theta\big|\psi(\tau\overline{x'},\tau^{2}\overline{y_{2}}\cdots,\tau^{k}\overline{y_{k}})-\psi(\tau\underline{x'},\tau^{2}\underline{y_{2}}\cdots,\tau^{k}\underline{y_{k}}) \big|\\
	&\leq C_b\theta\big|\psi(\tau x',\tau^{2}y_{2}\cdots,\tau^{k}y_{k})-\psi(\tau\overline{x'},\tau^{2}y_{2}\cdots,\tau^{k}y_{k}) \big|\\
	&+C_b\theta\big|\psi(\tau\overline{x'},\tau^{2}y_{2}\cdots,\tau^{k}y_{k})-\psi(\tau\overline{x'},\tau^{2}\overline{y}_{2}\cdots,\tau^{k}\overline{y}_{k}) \big|.
	\end{aligned}
	\end{equation}
	In order to estimate the right hand side of inequation \eqref{sca6}, let us observe that the following holds:
	\begin{equation}\label{1f}
	\|\nabla_{x'}\psi\|_{L^{\infty}(B(s))}\leq\tilde{\delta}\omega_{\nabla\psi}(s),
	\end{equation}
	because
	\begin{equation}\label{1g}
	\|\psi\|_{C^{1,\text{Dini}}}\leq\tilde{\delta},~~~~\psi(0,0)=0\hspace{0.3cm}~\text{and}\hspace{0.3cm}\Delta_{x'}\psi(0,0)=0.
	\end{equation}
	So, in view of \eqref{1f} and Taylor's formula the first term of the rightmost extreme inequality in \eqref{sca6} can be estimated as follows:
	\begin{equation}\label{sca7}
	\begin{aligned}{}
	\big|\psi(\tau x',\tau^{2}y_{2}\cdots,\tau^{k}y_{k})-\psi(\tau\overline{x}',\tau^{2}y_{2}\cdots,\tau^{k}y_{k}) \big|&\leq\tilde{\delta}\tau|x'-\overline{x}'|\omega_{\nabla\psi}(\tau)\\ &\leq C_{2} \tilde{\delta}\tau\omega_{\nabla\psi}(\tau)d(p,\overline{p}) \leq C_{2} \tilde{\delta}\tau\omega(\tau)d(p,\overline{p})
	\end{aligned}
	\end{equation}
	where, we use $|x'-\overline{x'}|\leq C_{2} d(p,\overline{p})$ and $\omega_{\nabla\psi}(\tau)\leq\omega(\tau).$ Now, by using the mean value theorem, we can also estimate the second term of the rightmost extreme inequality in \eqref{sca6} as follows:
	\begin{equation}\label{sca8}
	\begin{aligned}{}
	\bigg|\psi(\tau\overline{x'},\tau^{2}y_{2}\cdots,\tau^{k}y_{k})-\psi(\tau\overline{x'},\tau^{2}\overline{y}_{2}\cdots,\tau^{k}\overline{y}_{k}) \bigg|\leq C_{3}\tilde{\delta}\tau^{1+\alpha}d(p,\overline{p})
	\leq C_{3}\tau\omega({\tau})\tilde{\delta}d(p,\overline{p}).
	\end{aligned}
	\end{equation}
	The first inequality follows since $\tau^{i}\leq\tau^{1+\alpha}$ for any $2\leq i$.
	In \eqref{sca8}, we have used $\tau^{\alpha}\leq \omega(\tau)$ and $\tau<1.$ Now, let us take $\tilde{C}=\max\{C_{2},~C_{3}\}$ and choose
	\begin{equation}\label{1.200}
	\tilde{\delta}=\min\Big\{\frac{\delta}{2C_{b} \tilde{C}\theta},~~\frac{\delta}{C_{b}2m^{2}\theta}\Big\},
	\end{equation}
	Therefore, by the above choice of $\tilde{\delta}$ and using the inequalities from \eqref{sca7}, \eqref{sca8} in \eqref{sca6} we get  \eqref{1c}.
	
	\noindent
	\textbf{(3-(b))~\emph{Affine approximation of the solution $u$ on the non-characteristic portion of the boundary.}}
	Now, we show that $\{L_{\nu}\}$ the sequence of polynomial converges to linear function $\mathbb{L}$ as $\nu\rightarrow\infty.$ Moreover, $\mathbb{L}$ is an affine approximation of solution to \eqref{dp1} on $e\in\partial\Omega.$ By translation, in a similar way one can show that at each point of the non-characteristic portion of the boundary, there is an affine approximation of solution to \eqref{dp1}.
	More precisely, given any non-characteristic point $p_0\in\partial\Omega$ there exists an affine function $L_{p_0}$ such that
	\begin{equation}\label{1m}
	|u(p)-L_{p_{0}}(p)|\leq C_{aff}d(p,p_0)W(d(p,p_0)).
	\end{equation}
	Moreover, $W$ can be chosen to be $\alpha-$decreasing in the sense of definition \ref{alphad}.
	Now, let us try to prove \eqref{1m} for $p_0=e\in\partial\Omega$ by assuming that all the previous step holds at $e.$ Let us take an arbitrary $p\in\partial\Omega\cap B(1)$ and choose an integer $\nu\in\mathbb{N}$ such that $\sigma^{\nu+1}\leq |p|\leq \sigma^{\nu}.$ Let us define $\mathbb{L}=\lim_{\nu\rightarrow\infty}L_\nu,$ where $L_\nu$ is from above step and consider
	\begin{equation}\label{sca9}
	\begin{aligned}{}
	&|u(p)-\mathbb{L}(p)|\leq |u(p)-L_{\nu}(p)|+|L_{\nu}(p)-\mathbb{L}(p)|\\
	&\leq\sigma^{\nu}\omega(\sigma^{\nu})+\sum^{\infty}_{j=0}|L_{\nu+j}-L_{\nu+j+1}|_{\Omega\cap B(\sigma^{\nu})}~~~~~~\leq\sigma^{\nu}\omega(\sigma^{\nu})+C_{b}\sigma^{\nu}\sum^{\infty}_{j=\nu}\omega(\sigma^{j}).
	\end{aligned}
	\end{equation}
	The last step follows from \eqref{a1} and \eqref{1b}.
	In order to estimate the sum in the last line of \eqref{sca9}, let us observe that for any fixed $j\in\mathbb{N},$ it follows from \eqref{omega} that
	\begin{equation}\label{1m1}
	\omega(\sigma^{j})\leq \frac{1}{\tilde{\delta}}\sum^{j/2}_{l=0}\omega_{1}(\sigma^{j-l})\omega_{2}(\sigma^{l})+\frac{1}{\tilde{\delta}}\sum^{j}_{l=j/2}\omega_{1}(\sigma^{j-l})\omega_{2}(\sigma^{l})+\sigma^{j\alpha}.
	\end{equation}
	Therefore, we have
	\begin{equation}\label{sca10}
	\begin{aligned}{}
	\sum^{\infty}_{\nu=j}\omega(\sigma^{j})&\leq \frac{1}{\tilde{\delta}}\sum^{\infty}_{\nu=j}\sum^{j/2}_{l=0}\omega_{1}(\sigma^{j-l})\omega_{2}(\sigma^{l})+\frac{1}{\tilde{\delta}}\sum^{\infty}_{\nu=j}\sum^{j}_{l=j/2}\omega_{1}(\sigma^{j-l})\omega_{2}(\sigma^{l})+\sum^{\infty}_{\nu=j}\sigma^{j\alpha}\\
	&\leq\frac{C}{\tilde{\delta}}\sum^{\infty}_{\nu=j}\omega_{1}(\sigma^{j/2})+\frac{1}{\tilde{\delta}}\sum^{\infty}_{\nu=j}\sum^{j}_{j=l/2}\omega_{1}(\sigma^{j-l})\omega_{2}(\sigma^{l})+\sum^{\infty}_{\nu=j}\sigma^{j\alpha}\\
	&=D+E+F.
	\end{aligned}
	\end{equation}
	In the second line we have used $\sum^{\infty}_{j=0}\omega_{2}(\sigma^{j})\leq C,$ see \eqref{m7}. In order to estimate $D, E$ and $F$ in \eqref{sca10}, let us define
	\begin{equation}\label{sca11}
	\begin{aligned}{}
	&W_{1}(\epsilon):=\sup_{a\geq0}\int^{a+\epsilon^{1/2}}_{a}\frac{\omega_{1}(s)}{s}ds,~~~~~W_{2}(\epsilon):=\epsilon^{\alpha/2},\\
	&W_{3}(\epsilon):=\sup_{a\geq0}\int^{a+\epsilon^{\frac{1}{2}}}_{a}[g^{**}(s)s^{\frac{q}{Q}}]^{\frac{1}{q}}\frac{ds}{s}~~~\text{and}~~~W_{4}(\epsilon)=\sup_{a\geq0}\int^{a+\epsilon^{1/2}}_{a}\frac{\tilde{\omega}_{2}(s)}{s}ds.
	\end{aligned}
	\end{equation}
	\noindent
	\textbf{Estimate~for~D:}~We estimate $D$ as follows:
	\begin{equation}\label{sm1}
	D\leq C\int^{\sigma^{\frac{\nu}{2}}}_{0}\frac{\omega_{1}(s)}{s}ds\leq CW_{1}(\sigma^{\nu})~~~\text{in~view~of~definition~of~}~W_{1}.
	\end{equation}
	\noindent
	\textbf{Estimate~for~E:}~We use the standard formula for geometric series to get:
	\begin{equation}\label{sm2}
	E\leq C\sigma^{\nu\alpha}=CW_{2}(\sigma^{2\nu})\leq CW_{2}(\sigma^{\nu})~~~\text{in~view~of~definition~of}~W_{2},
	\end{equation}
	where the last inequality follows because $\sigma<1$ and so $\sigma^{2\nu}<\sigma^{\nu}.$
	
	\noindent
	\textbf{Estimate~for~F:}~~Let us observe
	\begin{align}\nonumber
	F\leq&C\Big(\sum^{\infty}_{j=\frac{\nu}{2}}\omega_{2}(\sigma^{j})\Big)\Big(\sum^{\infty}_{j=1}\omega_{1}(\sigma^{j})\Big)\nonumber
	\leq C\Big(\sum^{\infty}_{j=\nu/2}\omega_{2}(\sigma^{j})\Big)~~~~~(\text{by}~\eqref{m5}) \\\nonumber
	&=\Big[C_{II}\sum^{\infty}_{j=\nu/2}\sigma^{j}\Big(\frac{1}{|\Omega\cap B(\sigma^{j})|}\int_{\Omega\cap B(\sigma^{j})}|g|^{q}\Big)^{\frac{1}{q}}+\underbrace{\sum^{\infty}_{j=\nu/2}\omega_{f}(\sigma^{j})}_{I}+\sum^{\infty}_{j=\nu/2}\sigma^{j\alpha}\Big]\\\nonumber
	\leq&\Big[\tilde{C}\underbrace{\int^{\sigma^{\frac{\nu Q}{2}}}_{0}\big[g^{**}(s)s^{\frac{q}{Q}}\big]^{\frac{1}{q}}\frac{ds}{s}}_{III}+\underbrace{\sum^{\infty}_{j=\nu/2}\tilde{\omega}_{2}(\sigma^{j})}_{II}+\sum^{\infty}_{j=\nu/2}\sigma^{j\alpha}\Big]\\
	\leq&C_{1}\underbrace{W_{3}(\sigma^{\nu})}_{IV}+C_{2}W_{4}(\sigma^{\nu})+C_{3}W_{2}(\sigma^{\nu})\label{sm3},
	\end{align}
	where we have used the fact that $\sigma^{\frac{Q\nu}{2}}\leq\sigma^{\frac{\nu}{2}}$ in deducing $II$ from  $I$ (which follows since $\sigma<1$ and $Q\geq2$) and $\omega_{f}(s)\leq \tilde{\omega}_{2}(s)$ in deducing  $IV$ from  $III$. From \eqref{sm1}, \eqref{sm2}, \eqref{sm3} and the choice of $|p|\approx\sigma^{\nu},$ we find that $D,E$ and $F\rightarrow0$ as $|p|\rightarrow0.$ It is also clear from the definition of $W_{2}$ that it is nondecreasing. Moreover, we can also assume that each $W_{i}$ is non-decreasing.
	
	Without loss of generality we can assume that $W_{j}(\cdot),$ for $j=1\cdots4,$ are $\alpha$ decreasing in the sense of definition \eqref{alphad}.
	Indeed, let us first consider the case $W_{1}.$ From the fact that $\omega_{1}(\cdot)$ is a modulus of continuity and concave, we have that $W_{1}(\cdot)$ satisfies all the properties of the definition \eqref{kartik} and hence is also a modulus of continuity. Using Theorem \ref{KA2.5}, without loss of generality, we can assume $W_{1}$ is also concave. Now, we can replace $W_{1}(s)$ by $W_{1}(s^{\alpha}),$ if necessary, we can assume $W_{1}(\cdot)$ is $\alpha$ decreasing. Since $W_{4}(\cdot)$ is same as $W_{1}$ so the assertion for $W_{4}$ also follows.
	Now let us consider the case of $W_{3}.$ From the definition \eqref{kartik}, it is clear that $W_{3}$ is a modulus of continuity. Using Theorem \ref{KA2.5}, without loss of generality, we can assume $W_{3}(\cdot)$ is also concave. Now replacing $W_{3}(s)$ by $W_{3}(s^{\alpha}),$ if necessary, we can assume that $W_{3}(\cdot)$ is $\alpha$ decreasing.	
	
	Without loss of generality, we will denote the changed $W_{i}$ with the same notion and assume that these are $\alpha$ decreasing. With the above $W_{i}(\cdot)$ in the hand we define a new $\alpha$ decreasing function $W(\cdot)$ as follows:
	\begin{equation}\label{modu}
	W(s):=W_{1}(s)+W_{2}(s)+W_{3}(s)+W_{4}(s),
	\end{equation}
	which is again $\alpha$ decreasing. So in view of $|p|\approx\sigma^{\nu},$  \eqref{sca10}, \eqref{sm1}, \eqref{sm2} and \eqref{sm3} along with \eqref{sca9} we have
	\begin{equation}
	|u(p)-\mathbb{L}(p)|\leq C\sigma^{\nu}W(\sigma^{\nu})=C |p|W(|p|),
	\end{equation}
	and this completes the proof of this step.
	\subsection{Interior~estimate}
	In the next two steps we prove the continuity of the horizontal gradient on the non-characteristic portion of the boundary and up to the boundary, respectively. In the proof of these results we need a scale invariant interior estimate, see Corollary \ref{intr1}. This estimate is a suitable adaptation of \cite[Corollary 3.2]{BG} to our set up. Since the proof follows on the same line as of the boundary case, therefore, we just sketch the proof instead of giving the complete details.
	\begin{corollary}\label{intr1}
		Given $0<\tau\leq1$, let $u\in\mathscr{L}^{1,2}_{loc}(\V_\tau) \cap C(\overline{\V_\tau})$ be a weak solution to
		\begin{equation}\label{dp01}
		\sum_{i,j=1}^m X_i^\star (a_{ij}X_j u)  = \sum_{i=1}^m X_i^\star f_i + g\ \ \ \text{in}~~~B(\tau),
		\end{equation}
		where $f= (f_1,...,f_m) \in \Tau^{0, \text{Dini}}(B(\tau)),$ $a_{ij} \in \Tau^{0, \text{Dini}}(B(\tau)),$ $a_{ij}$ satisfies \eqref{ea01} and $g \in L^{q}(B(\tau))$ with $~2q>Q$. Then, $u \in \Tau^{1}(B(\tau/2)).$ Moreover, we have the following estimates
		\begin{equation}\label{i1311}
		|\nabla_\cH u(e)|\leq\frac{C\|u\|_{L^{\infty}(B(\tau))}}{\tau}\big(1+W(\tau)\big),
		\end{equation}
		and
		\begin{equation}\label{i1331}
		|\nabla_\cH u(p)-\nabla_\cH u(e)|  \leq C \|u\|_{L^{\infty}(B(\tau)}\big(W(|p|)+\frac{|p|^\alpha}{\tau^{1+\alpha}}\big),
		\end{equation}
		$p\in B(\tau/2),$ where $C>0$ is a universal constant and $W(\cdot)$ is a given by \eqref{modu}.
	\end{corollary}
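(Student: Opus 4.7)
The plan is to mirror the boundary argument developed in Step 3 of the proof of Theorem \ref{main}, but working with interior balls $B(\sigma^\nu)$ in place of $\V_{\sigma^\nu}$, and to construct a horizontal first-order Taylor polynomial of $u$ at the center $e$ with all horizontal coefficients active (rather than only the $x_m$ coefficient as dictated by the vanishing boundary condition in the boundary case). First I would reduce to $\tau=1$ by dilation: set $\tilde u(p)=u(\delta_\tau p)/\|u\|_{L^\infty(B(\tau))}$, which solves an equation of the same form on $B(1)$ with rescaled coefficients $a_{ij}(\delta_\tau\cdot)$, scalar term $\tau^2 g(\delta_\tau\cdot)$ and vector term $\tau f_i(\delta_\tau\cdot)$; the small-data normalization of Remark \ref{remark1} is then invoked so that the smallness condition \eqref{remark2} holds for the rescaled data, which in turn enables the applicability of the relevant compactness statement.

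The interior analogue of Lemma \ref{compactness} that I would need states: if the smallness condition \eqref{dilf1111} (minus the $\psi$ and $h$ parts) holds, then $\tilde u$ is $\varepsilon$-close in $L^\infty(B(1/2))$ to some $w\in C^2(\overline{B(1/2)})$ with $\|w\|_{C^2}\leq CC^\star$, solving a constant-coefficient horizontal Laplacean in $B(1/2)$. This is established by the same contradiction-plus-Arzel\`a--Ascoli argument as Lemma \ref{compactness}, using H\"ormander hypoellipticity to promote the weak limit to a classical solution; no boundary flattening is needed and no trace condition enters the limit equation. Given $w$, Taylor's theorem in the $C^2$ sense at $e$ produces a horizontal linear polynomial $L(p)=\sum_{j=1}^m l_j x_j$ (the constant and horizontal parts of the Taylor expansion) such that $\|w-L\|_{L^\infty(B(\sigma))}\leq C^\star\sigma^2\leq \sigma^{1+\alpha}/4$, provided $\sigma$ is chosen as in \eqref{choicesigma}.

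With these two ingredients I would run the induction of Step 3(a) verbatim: produce polynomials $\{L_\nu\}\subset\mathfrak{P}_1$ with
\begin{equation*}
\|\tilde u-L_\nu\|_{L^\infty(B(\sigma^\nu))}\leq \sigma^\nu\omega(\sigma^\nu),\qquad \|L_{\nu+1}-L_\nu\|_{L^\infty(B(\sigma^\nu))}\leq C\sigma^\nu\omega(\sigma^\nu),\qquad |\nabla_\cH L_\nu|\leq C_b\theta,
\end{equation*}
where the inductive step is obtained by rescaling the difference $\tilde u-L_\nu$ on $B(\sigma^\nu)$ via the analogue of \eqref{rescaled}, checking once more that the rescaled coefficients, right-hand side and $L^q$ norm of $g$ all verify the smallness hypotheses of the compactness lemma (this uses \eqref{1h}, the Dini modulus of $f_i$, and the bound of $\|\tilde{\tilde g}\|_{L^q}$ via the Lorentz potential already exploited in the boundary case). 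Summing the telescoping differences and using the summability \eqref{1mod} of $\omega$, the sequence $\{L_\nu\}$ converges to an affine polynomial $\mathbb L(p)=\sum l_j x_j$ satisfying $|\tilde u(p)-\mathbb L(p)|\leq C|p|W(|p|)$ on $B(1/2)$. Reading off the horizontal gradient, identifying $\nabla_\cH \tilde u(e)=(l_1,\ldots,l_m)$, and unwinding the scaling $\nabla_\cH u(e)=(\|u\|_{L^\infty(B(\tau))}/\tau)\nabla_\cH\tilde u(e)$ yields \eqref{i1311}, with the factor $1+W(\tau)$ coming from summing the contributions of the first $\nu_0(\tau)$ iterations.

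For \eqref{i1331}, I would repeat the construction at an arbitrary $p\in B(\tau/2)$ to produce a Taylor polynomial $\mathbb L_p$ at $p$, choose $\nu$ with $\sigma^{\nu+1}\leq |p|\leq \sigma^\nu$, and compare the two affine expansions $\mathbb L_e,\mathbb L_p$ on the overlap of the scales via the triangle inequality and the $\alpha$-decreasing property of $W$ from Definition \ref{alphad}; this produces the second summand $|p|^\alpha/\tau^{1+\alpha}$ through the scale-mismatch term, exactly as in Step 4 of the main theorem. The main technical obstacle I anticipate is the careful scaling bookkeeping that recovers the precise $\tau$-dependence in \eqref{i1311}--\eqref{i1331}: tracking how the bounds $|\nabla_\cH L_\nu|\leq C_b\theta$ at the unit scale translate through the dilation and how the modulus $W$ interacts with the stopping-scale $\tau$. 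Apart from this, the argument is essentially a copy of the boundary proof with the $x_m$-direction constraint removed, the flattening map $\Phi$ dispensed with, and Theorem \ref{KNc1} replaced by the standard $C^2$ interior regularity for constant-coefficient horizontal Laplaceans.
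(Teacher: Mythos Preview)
Your proposal is correct and follows essentially the same approach as the paper's proof: rescale to the unit ball via $v(p)=u(\delta_\tau p)/\|u\|_{L^\infty(B(\tau))}$, run the Step~3 induction with an interior version of the compactness lemma to build approximating affine polynomials, and scale back. The only minor point to align with the paper is that in the interior case the affine approximants carry a free constant term, i.e., $L_\nu(p)=a_\nu+\langle b_\nu,x\rangle$ (since $u(e)$ need not vanish), and the paper derives \eqref{i1331} directly by rescaling the corresponding estimate for $v$ on $B(1/2)$ rather than by an explicit two-point comparison, though your Step~4-style comparison would also work.
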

	\begin{proof}
		Given a function $u$ let us define a new function $v(p)=u(\delta_{\tau}(p))$ for $p\in B(1).$  It is clear that $v$ satisfies the following equation
		\begin{equation}
		\sum_{i,j=1}^m X_i^\star(a_{ij,\tau} X_j v) = \sum_{i=1}^m X_i^\star  f_{i,\tau} + g_{\tau}~~~~\text{in}~~~B(1),
		\end{equation}
		where $f_{i,\tau}(p)=\tau f_{i}(\delta_{\tau}(p))$ and $g_{\tau}(p)=\tau^{2}g(\delta_{\tau}(p)).$ Without loss of generality, we can assume that $\|v\|_{L^{\infty}(B(1))}\leq 1,$ since otherwise we consider the function $v(p)=\frac{u(\delta_{\tau}(p))}{\|u\|_{L^{\infty}(B(\tau))}}.$ In order to prove \eqref{i1311}, it is sufficient to prove that there exists a sequence of polynomials $\{L_{\nu}\}$ of the form $L_{\nu}(p)=a_{\nu}+\langle b_{\nu},x\rangle,$ where $(x,y_{2},\cdots,y_k)$ denote the logarithmic coordinate of $p,$ such that
		\begin{equation}\label{1.53}
		\begin{aligned}{}
		\|v-L_\nu\|_{L^{\infty}(B(\sigma^\nu))}&\leq \sigma^\nu\omega(\sigma^\nu)~~~~\text{and}~~|b_\nu|\leq C,\\
		|a_{\nu+1}-a_\nu|&\leq C\sigma^{\nu}\omega(\sigma^{\nu}),~~~~
		|b_{\nu+1}-b_\nu|\leq C\omega(\sigma^\nu).
		\end{aligned}
		\end{equation}
		As in the proof of Step (3), the above inequalities \eqref{1.53} follow by the induction argument. Here, we skip the details. Hence, using the estimates from before (adapted to the interior case), one sees that
		\begin{equation}
		|\nabla_\cH v(e)|\leq C(1+W(|p|)).
		\end{equation}
		Therefore, scaling back to $u$ we get
		\begin{equation}
		|\nabla_\cH u(e)|\leq\frac{C\|u\|_{L^{\infty}(B(\tau))}}{\tau}\big(1+W(|p|)\big).
		\end{equation}
		Analogously, we also get
		\begin{equation}\label{1.57}
		|\nabla_\cH  v(p)-\nabla_\cH v(e)|\leq C\big(\tau W(\tau|p|)+|p|^\alpha\big),
		\end{equation}
		for all $p\in B(1/2).$ Re-scaling the inequality \eqref{1.57} back to $u,$ we get the following inequality
		\begin{equation*}
		|\nabla_\cH u(\delta_\tau(p))-\nabla_\cH u(e)|\leq C\frac{\|u\|_{L^{\infty}(B(\tau)}}{r}\big(\tau W(\tau|p|)+|p|^\alpha\big),
		\end{equation*}
		that is,
		\begin{equation*}
		|\nabla_\cH u(\delta_\tau(p))-\nabla_\cH u(e)|\leq C\|u\|_{L^{\infty}(B(\tau)}\big(W(\tau|p|)+\frac{|p|^\alpha}{\tau}\big).
		\end{equation*}
		Now, putting back $q=\delta_{\tau}p$ we get
		\begin{equation*}
		|\nabla_\cH u(q)-\nabla_\cH u(e)|\leq C\|u\|_{L^{\infty}(B(\tau)}\big(W(|q|)+\frac{|q|^\alpha}{\tau^{1+\alpha}}\big),
		\end{equation*}
		which completes the proof of the Corollary.
	\end{proof}
	Having finished the interior estimate now let us move to the next step.
	
\noindent	
	\textbf{Step-(4)~Continuity of the horizontal gradient on $\mathscr S_{1/2}.$} In the step (3), we have shown that for any $ p\in \mathscr S_{1/2},$  there is a Taylor polynomial $L_p$ of $u$ at $p.$ In this step, our objective is to show that for any (non-characteristic) points $ p_1, p_2 \in \mathscr S_{1/2}$, the following estimate holds:
	\begin{equation}\label{bdcmp1}
	|\nabla_\cH L_{p_1} - \nabla_\cH L_{p_2}| \leq C \, (W(d(p_1,p_2))),
	\end{equation}
	for some universal $C,$ where $W(.)$ is a modulus function defined by \eqref{modu}.
	
	\begin{proof}{ of \eqref{bdcmp1}.}~Let $t= d(p_1,p_2)$. We consider a ``non-tangential" point $p_3 \in \V_1$ at a (pseudo) distance from $p_1$ comparable to $t$,  i.e., let $p_3$ be such that
		\begin{equation}\label{nont}
		d(p_3, p_1) \sim t,\ d(p_3, \pa \Om)  \sim t,
		\end{equation}
		where we have assumed $d(p,\pa \Om)= \underset{p' \in \pa \Om}{\inf} \ d(p,p')$. Since $\mathscr S_1$ is a non-characteristic $C^{1,\text{Dini}}$ portion of $\pa \Om$, therefore, it is possible to find such a point $p_3$. Arguing as in the proof of \cite[Theorem 7.6]{DGP}, at any scale $t$ one can find a non-tangential pseudo-ball from inside centered at $p_3$. In fact, there exists a universal $a>0$ sufficiently small (which can be seen to depend on the Lipschitz character of $\pa \Om$ near the non-characteristic portion $\mathscr S_1$) such that for some $c_0$ universal one has
		\[
		d(p,\pa \Om) \geq  c_0 t~~~\text{for all}~~~p \in B(p_3,a t).
		\]
		This allow us to apply step (3) above and conclude that there exists a universal $C>0$ such that for all $p\in B(p_3, a t)$ we have:
		\begin{equation}\label{3.41}
		|u(p)- L_{p_1}(p)| \leq C \, t \, W(t),\ \hspace{0.5cm}\ \ |u(p) - L_{p_2}(p)|\leq C tW(t).
		\end{equation}
		Now, for $\ell=1, 2$  we note that  $v_\ell= u- L_{p_\ell}$ solves
		\begin{equation}\label{new21}
		\sum_{i,j=1}^m X_{i}^\star( a_{ij} X_j v_\ell) = \sum_{i=1}^m X_{i}^\star  F_i^\ell + g,
		\end{equation}
		where we have let
		\[
		F_i^\ell \overset{def}{=}  f_i - \sum_{j=1}^m  a_{ij} X_j L_{ p_\ell}.
		\]
		Since $f_i$ and $a_{ij}$ are Dini continuous, therefore, without loss of generality we can assume that $F^\ell_i,$ are Dini continuous. Also, from \eqref{3.41} we see that $v_\ell$ satisfies
		\begin{equation}\label{n1001}
		||v_\ell||_{L^{\infty}(B(p_3,a t))} \leq C t W(t), \ \ \ \ \ell=1, 2.
		\end{equation}
		With \eqref{n1001} in hand, we can now use the interior estimate \eqref{i1311} in Corollary \ref{intr1} in the pseudo-ball $B(p_3,a t)$ to obtain the following estimate for $\ell = 1, 2$
		\begin{eqnarray} \label{1.28}
		|\nabla_\cH v(p)|=|\nabla_\cH u(p)-\nabla_\cH L_{p_\ell}(p)| & \leq & \frac{C}{t} ||u-L_{p_\ell}||_{L^{\infty}(B(p_0,t))}(1 + W(t))\nonumber\\
		&& \leq C \, W(t),
		\end{eqnarray}
		by \eqref{3.41}.
		From \eqref{1.28} and the triangle inequality we obtain that the following estimate holds:
		\begin{equation*}
		|\nabla_\cH L_{p_1} - \nabla_\cH L_{p_2}| \leq C\, W(t) \leq  C \, (W(d(p_1,p_2))),
		\end{equation*}
		where we have used $t\sim d(p_1,p_2),$ which is the desired estimate \eqref{bdcmp1}.
	\end{proof}
	
	\noindent
	\textbf{Step-(5)~Patching the interior and boundary estimate:~}
	In this step we prove that the horizontal gradient of a weak solution to \eqref{dp1} is $\Gamma^{1}$ up to the boundary. First, we observe that there is an $\ve>0$ sufficiently small such that for any $p\in \V_\ve,$ there exists $ p_0 \in \mathscr S_{1/2}$ such that
	\begin{equation}\label{eqt1}
	d(p, p_0) = d(p,\pa \Om).
	\end{equation}
	To finish the proof of the Theorem \ref{main}, we will show that for all $p,p^{\star}\in \V_\ve$ we have:
	\begin{equation}\label{fop1}
	|\nabla_\cH u(p)- \nabla_\cH u(p^{\star})|\leq  C^\star \left(W(d(p,p^{\star}))\right),
	\end{equation}
	for some  universal constant $C^\star >0$. Let $p, p^{\star} \in \V_\ve$ be the two given points. Let $p_0,  p_0^{\star}$ be the corresponding points in $\mathscr S_{1/2}$ for which \eqref{eqt1} holds. Let us write $\delta(p)=d(p,\pa \Om)$ for  $p \in \Om$. Without loss of generality we may assume that
	\begin{eqnarray} \label{1.33}
	\delta(p)= \min \{\delta(p), \delta(p^{\star})\}.
	\end{eqnarray}
	By step-(3), there exists a first-order polynomial $L_{p_0}$ such that for every $q \in \V_1$ we have
	\begin{equation}\label{fopb1}
	|u(q) - L_{p_0}(q)|\leq C_2d(p_0,q) W(d(p_0,q)),
	\end{equation}
	where $ p_0$ is as in \eqref{eqt1}.
	Now, there are two possibilities:
	\begin{itemize}
		\item[(a)] $d(p, p^{\star}) \leq \frac{\delta(p)}{2}$;
		\item[(b)] $d(p, p^{\star}) > \frac{\delta(p)}{2}$.
	\end{itemize}
	\medskip
	\noindent \textbf{(a)}~In view of \eqref{1.33}, it is clear that $B(p, \delta(p)) \subset \Omega.$ Now, let us consider the function $v:=u-L_{p_0},$ where $p_0 \in\mathscr S_{1/2}$ is the point corresponding to $p$ discussed above and $L_{p_0}$ is the polynomial from step-(3). Again it is easy to see that $v$ satisfies an equation of the type \eqref{new21} in $B(p, \delta(p)) \subset \Omega.$ Now, we can apply Corollary \ref{intr1}~ (interior estimate) along with \eqref{fopb1} to get the following estimate:
	\begin{equation}\label{sup11}
	||v||_{L^{\infty}(B(p, \delta(p))} \leq \tilde{C}_2 \delta(p) W(\delta(p)),
	\end{equation}
	for some $\tilde{C}_2>0.$ Since $p^{\star} \in B(p, \delta(p)/2)$, so by using the interior estimate \eqref{i1331} (Corollary \ref{intr1}) and \eqref{sup11}, we find that for some $\tilde C$ depending also on $\tilde{C}_2$ the following estimates hold:
	\begin{align}\label{sup21}
	& |\nabla_\cH v(p) - \nabla_\cH v(p^{\star})|=|\nabla_\cH u(p) - \nabla_\cH u(p^{\star})|
	\\
	& \leq C \left(W(d(p,p^{\star})) \, [||u-L_{p_0}||_{L^{\infty}(B(p,\delta(p)))}] +\frac{|d(p,p^{\star})|^{\alpha}}{\delta(p)^{1+\alpha}}[||u-L_{ p_0}||_{L^{\infty}(B(p,\delta(p)))}]\right) \notag\\
	&\leq C \left(W(d(p,p^{\star})) \,[ \delta(p)\,W(\delta(p))] +\frac{|d(p,p^{\star})|^{\alpha}}{\delta(p)^{\alpha}}[ W(\delta(p))] \right).
	\notag
	\end{align}
	Now, $\alpha-$decreasing property of  $W(\cdot)$ implies
	\begin{equation}\label{1.331}
	\begin{aligned}{}
	&\frac{|d(p,p^{\star})|^{\alpha}}{\delta(p)^{\alpha}}[ W(\delta(p))]\leq W(d(p,p^{\star})).
	\end{aligned}
	\end{equation}
	With the help of \eqref{1.331}, \eqref{sup21} can be rewritten as follows:
	$$|\nabla_\cH u(p) - \nabla_\cH u(p^{\star})|\leq C(W(d(p,p^{\star}))),$$
	which gives \eqref{fop1}.
	
	\noindent \textbf{(b)}~In this case, we have $d(p, p^{\star})>\frac{\delta(p)}{2}$ and from \eqref{eqt1} we get
	\begin{eqnarray} \label{1.36}
	d(p,p_0)=d(p,\pa\Om)=\delta(p)<2 d(p, p^{\star}).
	\end{eqnarray}
	Let us recall the following pseudo-triangle inequality for $d$
	\begin{equation}\label{t11}
	d(p,p') \leq C_0 (d(p,p'') + d(p'',p')),
	\end{equation}
	for all $p, p', p''\in \G$, and a universal $C_0>0$.
	From \eqref{1.36} and \eqref{t11} we get
	\begin{equation}\label{n1011}
	d(p^{\star}, p_0) \leq C_0(d(p^{\star},p) + d(p,p_0)) \leq  C_0 (d(p^{\star},p) + 2 d(p^{\star},p)) = 3 C_0 d(p, p^{\star}).
	\end{equation}
	Since, we also have $d(p^{\star},p_0)\geq d(p^{\star},\pa \Om) = \delta(p^{\star}),$ therefore, in view of \eqref{n1011}, we get
	\begin{equation}\label{n1031}
	\delta(p^{\star})\leq 3C_0 d(p,p^{\star}).
	\end{equation}
	So by combining \eqref{t11}, \eqref{n1011} and \eqref{n1031} we finally obtain
	\begin{equation}\label{dist1}
	d(p_0, p_0^{\star}) \leq C_0 (d(p_0, p^{\star}) + d(p^{\star}, p_0^{\star})) = C_0 (d(p_0,p^{\star}) + \delta(p^{\star})) \leq 6 C_0^2 d(p, p^{\star}).
	\end{equation}
	Let $b$ be the universal constant in the existence of a non-tangential (pseudo)-ball in the previous step-(4). Therefore, from Step -(3), we have the following estimates:
	\begin{equation}\label{apt1}
	||u - L_{ p_0}||_{L^{\infty}(B(p, b\delta(p))} \leq  \tilde K_0 \delta(p) W(\delta(p)),\ \ \ \ \ ||u - L_{p_0^{\star}}||_{L^{\infty}(B(p, b\delta(p^{\star}))} \leq  \tilde K_0 \delta(p^{\star}) W(\delta(p^{\star})).
	\end{equation}
	Let us define $v= u - L_{p_0},$ and observe that $v$ satisfies an equation of the type \eqref{new21}. Therefore, arguing as in \eqref{3.41}-\eqref{1.28} and using the former estimate \eqref{apt1} in $B(p,b \delta(p))$ along with the interior estimate in Corollary \ref{intr1}, we obtain that for some universal constant $C>0,$ we have
	\begin{equation}\label{imo21}
	|\nabla_\cH  u(p) - \nabla_\cH L_{p_0}|= |\nabla_\cH v(p)|  \leq C  \, W(\delta(p)) \leq C \, W (d(p, p^{\star})),
	\end{equation}
	where in the last inequality we have used $\delta(p) \leq 2 d(p, p^{\star}).$ Arguing as before \eqref{imo21}, we obtain
	\begin{equation}\label{imo11}
	|\nabla_\cH  u(p^{\star})-\nabla_\cH L_{p_0^{\star}}|\leq C W(\delta(p^{\star}))\leq C \, W(d(p, p^{\star}))
	\end{equation}
	by \eqref{n1031}.
	Now, from \eqref{bdcmp1} and  \eqref{dist1} we have
	\begin{eqnarray}\label{finali1}
	|\nabla_\cH L_{p_0} - \nabla_\cH L_{p_0^{\star}}|
	&&\leq C \, W(d(p_0, p_0^{\star})) \le C \, W(d(p, p^{\star})).
	\end{eqnarray}
	Applying the triangle inequality along with the estimates \eqref{imo21}, \eqref{imo11} and \eqref{finali1} we get
	$$
	|\nabla_\cH u(p)- \nabla_\cH u(p^{\star})|\leq  C^\star \left(W(d(p,p^{\star}))\right).
	$$ This completes the proof of the Theorem \ref{main}.
\end{proof}

\section{Acknowledgement}
The authors are thankful to Prof. Agnid Banerjee for suggesting the problem and for various stimulating discussions on this topic.

\end{document}